\documentclass[14pt]{amsart}
\usepackage[utf8]{inputenc}
\usepackage{amsmath}
\usepackage{amsfonts}
\usepackage{amssymb}
\usepackage{physics}
\usepackage{amsthm}
\usepackage{graphicx}
\usepackage{mathrsfs}
\usepackage{enumitem}
\usepackage{float}
\usepackage{empheq}
\usepackage{setspace}
\usepackage{tikz-cd}
\usepackage{csquotes}
\usepackage{hyperref}

\renewcommand{\phi}{\varphi}

\newtheorem{definition}{Definition}[section]
\newtheorem{example}{Example}[section]
\newtheorem{theorem}{Theorem}[section]
\newtheorem{proposition}{Proposition}[section]
\newtheorem{corollary}{Corollary}[section]

\newtheorem{lemma}{Lemma}[section]

\renewcommand{\phi}{\varphi}

\title{Hyperbolicity of Multiple Ascending HNN Extensions of Free Groups}
\author{Sk Kiran Ajij}
\address{Tata Institute of Fundamental Research, Mumbai}
\email{sekh@math.tifr.res.in}

\begin{document}
  \maketitle

    \begin{abstract}
      Bestvina-Feighn-Handel show that for finitely many generic and independent hyperbolic automorphisms $\phi_1, \cdots, \phi_r$ of $F_n$, the resulting
      extension $F_n \rtimes F_r$ is hyperbolic.
      This paper generalizes the above statement to the case where  $\phi_1, \cdots, \phi_r$ are hyperbolic non-surjective endomorphisms of $F_n$.
      In our case the output is a multiple HNN extension associated to a graph with one vertex and $r$ edges. All edge
      and vertex groups are isomorphic to $F_n$.
    \end{abstract}

    \tableofcontents
    \section{Introduction}
  
Thurston~\cite{Thurs82} proved that the interior of the mapping torus $M_f$ of a hyperbolic surface $S$ with homeomorphism
$f:S\to S$ admits a finite-volume hyperbolic structure if and only if $f$ is isotopic to a
\emph{pseudo-Anosov} homeomorphism.

Motivated by Thurston’s result, Bestvina and Feighn showed that if $G$ is a word-hyperbolic group and
$\phi:G\to G$ is a \emph{hyperbolic} automorphism, then the associated mapping torus
$
G \rtimes_{\phi} \mathbb{Z}
$
is word-hyperbolic~\cite{BF92}. Subsequently, Brinkmann proved that atoroidal automorphisms of free
groups—namely, automorphisms with no nontrivial periodic conjugacy classes are hyperbolic~\cite{Br00}.


Later Patrick Reynolds~\cite{Rey} established that every irreducible, non-surjective endomorphism
of a free group admits a train track representative with no illegal turns. Based on this,
Mutanguha proved that if $\phi:F_n\to F_n$ is an irreducible, non-surjective endomorphism such that
$F_n \rtimes_{\phi} \mathbb{Z}$ contains no subgroup isomorphic to $BS(1,m)$ for any $m\ge 1$, then the
associated ascending HNN extension is word-hyperbolic~\cite{Mut20}. We refer to such non-surjective
endomorphisms as \emph{hyperbolic}.

In the paper \cite{BFH97}, Bestvina-Feighn-Handel showed that the semidirect product $F_n\rtimes F_2$ taken with respect to automorphisms $\phi_1, \phi_2: F_n\to F_n$ is hyperbolic under the hypothesis that automorphisms are independent. A similar result was shown for surface groups by Mosher in \cite{Mos97} which used 3-out-of-4 stretch lemma.

In this article, we study the hyperbolicity of HNN extensions arising from multiple hyperbolic
endomorphisms. The main idea is to show that these endomorphisms satisfy an appropriate version of
the $3$-out-of-$4$ stretch condition. A related
version of this condition was previously discussed in Section~6 of~\cite{HMS25}. The main results of the article are as follows:


  \setcounter{theorem}{0}
  \setcounter{section}{4}
  \begin{theorem}
    Let $\phi_1,\ldots,\phi_r:F_n\to F_n$ be hyperbolic,
    essentially disjoint, non-surjective endomorphisms. Then there is a $N$ such that 
    the HNN extension
    \[
      F_n \ast_{\phi_1^N,\ldots,\phi_r^N}
    \]
    is word-hyperbolic.
  \end{theorem}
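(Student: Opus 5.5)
The plan is to realize $G=F_n\ast_{\phi_1^N,\ldots,\phi_r^N}$ as the fundamental group of a graph of groups with one vertex and $r$ loops, all vertex and edge groups $F_n$. Each edge inclusion is the identity on one side and $\phi_i^N$ on the other. We then apply the Bestvina--Feighn combination theorem to the associated tree of spaces. The edge maps are injective and the vertex space is a graph, so the only nontrivial hypothesis is the \emph{annuli flare} condition. Following the strategy of \cite{BFH97} and \cite{Mos97}, we reduce flaring to a uniform $3$-out-of-$4$ stretch statement: there are $\lambda>1$ and $N$ such that for every cyclically reduced nontrivial $w\in F_n$ (or conjugacy class), and for the legal ``moves'' along the tree (applying some $\phi_i^N$, or pulling back through some $\phi_j^N$ when $w$ lies in its image), at least three of the four quantities
\[
\|\phi_i^N(w)\|,\ \|\phi_i^{-N}(w)\|,\ \|\phi_j^{N}(w)\|,\ \|\phi_j^{-N}(w)\|
\]
satisfy $\ge\lambda\|w\|$ whenever they are defined. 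Here $\phi^{-N}(w)$ means the preimage, which exists uniquely by injectivity when $w\in\operatorname{im}\phi^N$. Once this holds for conjugacy-class lengths, the standard argument turns it into exponential flaring of hallways of bounded girth. A hallway along a geodesic in the Bass--Serre tree reads a sequence of such moves, and the stretch condition forces length to grow exponentially in at least one direction.

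To prove the stretch condition, we use the train track machinery available for each $\phi_i$. By Reynolds \cite{Rey}, each irreducible non-surjective $\phi_i$ has a train track representative with no illegal turns, i.e.\ an immersion $f_i\colon\Gamma_i\to\Gamma_i$ with expansion factor $\lambda_i>1$. Hyperbolicity (no $BS(1,m)$; Mutanguha \cite{Mut20}) gives that every conjugacy class grows exponentially under forward iteration, uniformly after passing to a power. Conversely, for $w$ in the image of $\phi_i^N$, the immersion property gives $\|\phi_i^{-N}(w)\|\le C\lambda_i^{-N}\|w\|$ up to bounded cancellation. So pulling back contracts, and going forward either expands by $\lambda_i^N$ or is not available in the inverse direction. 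This handles the directions within a single $\phi_i$. The cross terms are where \emph{essential disjointness} enters. I would interpret it, as the paper presumably defines it, as saying that the attracting laminations or legal structures of distinct $\phi_i$ are transverse, and that the images $\operatorname{im}\phi_i^N$ and $\operatorname{im}\phi_j^N$ for $i\neq j$ have intersection containing no long common subwords. Thus a word cannot be simultaneously short-preimage for both $\phi_i^N$ and $\phi_j^N$, except with bounded length. Consequently, among the four moves, at most one can be shrinking for large $N$: a word that is nearly a $\phi_i$-leaf is stretched by every $\phi_j^{\pm N}$, $j\neq i$.

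The key step I expect to be the main obstacle is making this dichotomy uniform. Every conjugacy class should either be close, in the sense of a fixed fraction of its length, to legal leaf segments of exactly one $\phi_i$, or be stretched by all of them. The natural route is a compactness argument on the space of projectivized currents (or laminations). Suppose stretching failed for a sequence $w_k$ with $N_k\to\infty$. Then a limiting current would have to be supported simultaneously on the attracting and repelling data of two different endomorphisms, contradicting essential disjointness. I would first prove the version for single iterates with constants depending on a cutoff length, then use bounded cancellation (Cooper's lemma for each $f_i$) to pass from long subwords to whole cyclic words. Finally I would choose $N$ large enough that $\lambda_i^N$ dominates all cancellation constants, which yields the $3$-out-of-$4$ condition and hence hyperbolicity of $G$.
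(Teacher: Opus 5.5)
Your overall architecture matches the paper's: the Bestvina--Feighn combination theorem, uniform forward stretch from Reynolds' immersive train tracks plus hyperbolicity, and a local condition at each ring. Two things go wrong, however.

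First, you guessed the hypothesis incorrectly. In the paper, \emph{essentially disjoint} is the algebraic condition $\phi_i^N(F_n)\cap g\,\phi_j^N(F_n)\,g^{-1}=\{e\}$ for all $g\in F_n$ and $i\neq j$. It is not a transversality statement about laminations or common subwords. With the actual definition the cross terms are exact and immediate: no nontrivial conjugacy class lies in the images of two different $\phi_i^N,\phi_j^N$, so no ring can be pulled back through two distinct endomorphisms. This is the one-line exclusion of $D_iD_j^{-1}$, $i\neq j$, in Proposition~\ref{prop:admis}. The currents/compactness step you flag as the main obstacle is therefore unnecessary. As sketched it is also not tied to the real hypothesis. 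What you describe is essentially the argument of Proposition~\ref{prop:indep_disj}, which the paper uses only to pass from \emph{independent} to essentially disjoint for Theorem~\ref{thm:main2}.

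The genuine gap is your claim that $\phi^{-N}(w)$ ``exists uniquely by injectivity''. Injectivity gives uniqueness of preimages of elements, not of conjugacy classes. It does not prevent $\phi_i^N(u)=h\,\phi_i^N(u')\,h^{-1}$ with $h\notin\phi_i^N(F_n)$ and $u,u'$ non-conjugate.

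Geometrically, each vertex of the Bass--Serre tree has one forward $\phi_i$-edge but infinitely many backward $\phi_i$-edges, one per coset of the infinite-index subgroup $\phi_i^N(F_n)$. A hallway can enter and leave a vertex through two different backward $\phi_i$-edges. Its rings are then $u,w,u'$ with $w\sim\phi_i^N(u)\sim\phi_i^N(u')$, so both neighbours are shorter than the central ring. Three things follow:
\begin{itemize}
\item your four-quantity condition, which allows one pull-back per endomorphism, never sees this configuration;
\item essential disjointness says nothing here, since $i=j$;
\item the deduction ``some side always stretches'' fails at such a local maximum.
\end{itemize}

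Controlling these same-endomorphism double pull-backs needs a further consequence of hyperbolicity of each individual $\phi_i$, namely Mutanguha's stabilization of pullbacks. The paper invokes it (Corollary~\ref{cor:gammahat}) to exclude the subword $D_iD_i^{-1}$. Only once both $D_iD_j^{-1}$ and $D_iD_i^{-1}$ are excluded does every thin annulus read a block of pull-backs followed by a block of forward moves. Then the uniform factor-$3$ stretch (Lemma~\ref{lem:31hyp}) gives flaring directly, with no compactness argument.
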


  \begin{theorem}
    Let $\phi_1,\ldots,\phi_r:F_n\to F_n$ be hyperbolic,
    non-surjective, independent endomorphisms of $F_n$. Then there is a $N$ such that the HNN
    extension $F_n \ast_{\phi_1^N,\ldots,\phi_r^N}$ is word-hyperbolic.
  \end{theorem}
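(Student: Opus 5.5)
The plan is to reduce Theorem 2 to Theorem 1: show that independence of the hyperbolic non-surjective endomorphisms $\phi_1,\ldots,\phi_r$ implies that suitable powers of them are essentially disjoint, and then apply Theorem 1. By Reynolds \cite{Rey}, each $\phi_i$ has a train track representative $f_i\colon\Gamma_i\to\Gamma_i$ with no illegal turns. Hence $\phi_i$ has an attracting lamination $\Lambda_i^+$, whose leaves are limits of $f_i^k$-images of edges. Since $F_n\ast_{\phi_i}$ is hyperbolic (Mutanguha \cite{Mut20}), each $\phi_i$ also has a well-defined "repelling" behaviour, coming from the complement of the nested images $\phi_i^k(F_n)$. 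I read \emph{independent} as: the stable and unstable laminations and the invariant images of distinct $\phi_i$ are pairwise distinct, and no leaf of one is carried by the data of another. \emph{Essentially disjoint}, the hypothesis of Theorem 1, asks for a uniform bound on the length of common subwords or overlaps between the legal structures of different $\phi_i$, i.e.\ between their laminations and images.

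First, using the no-illegal-turn property, I will show that for each $i$ and each cyclically reduced $w$, either $\|\phi_i^k(w)\|\geq \lambda_i^k C^{-1}\|w\|$ for a uniform stretch factor $\lambda_i>1$, or $w$ is close to lying in the "contracting" part. For non-surjective $\phi_i$, the contracting part should be words that are long segments of leaves not in $\phi_i^k(F_n)$. Second, I will argue by contradiction, following the standard BFH argument \cite{BFH97}. Suppose that for every $N$ the powers $\phi_i^N$ fail essential disjointness. Then there are arbitrarily long words $w_N$ that are simultaneously long legal segments for $\phi_i$ and lie in the bad region of $\phi_j$ for some fixed $i\neq j$. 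Passing to a subsequence and taking a limit in the space of laminations, I obtain a leaf of $\Lambda_i^+$ that is a leaf of $\Lambda_j^{\pm}$ or is carried by a finitely generated $\phi_j$-invariant subgroup. This contradicts independence.

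Third, once essential disjointness holds for some power, I apply Theorem 1 to $\phi_1^{M},\ldots,\phi_r^{M}$. This gives an $N'$ such that $F_n\ast_{\phi_1^{MN'},\ldots,\phi_r^{MN'}}$ is hyperbolic, so $N=MN'$ works. I must check that powers of hyperbolic non-surjective endomorphisms remain hyperbolic and non-surjective. Non-surjectivity is immediate. Hyperbolicity holds because $F_n\ast_{\phi^M}$ is a finite-index subgroup of $F_n\ast_\phi$, so a $BS(1,m)$ in it would also lie in $F_n\ast_\phi$.

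I expect the main obstacle to be the compactness step. Unlike the automorphism case, there is no inverse $\phi_j^{-1}$, so the "repelling lamination" has to be defined through the complement of the nested images $\phi_j^k(F_n)$. I also have to ensure that limits of long overlapping words actually produce genuine leaves or invariant subgroups. I would first try to control this using the bounded cancellation lemma for the train track maps $f_j$, together with finiteness of the relevant legal-turn structure.
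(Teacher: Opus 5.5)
Your reduction (independence $\Rightarrow$ essential disjointness for some power $\Rightarrow$ Theorem~\ref{thm:main1}) is the right skeleton, and your third step about powers is fine. The middle step, however, carries all the content, and it has a genuine gap: you argue against the wrong definitions. In this paper, \emph{essentially disjoint} is the algebraic condition $\phi_i^N(F_n)\cap g\,\phi_j^N(F_n)\,g^{-1}=\{e\}$ for all $g\in F_n$ and $i\neq j$. Its failure gives, for every $k$, a nontrivial $w_k\in\phi_i^k(F_n)\cap g_k\phi_j^k(F_n)g_k^{-1}$. It does not give long common subwords, or words lying in a ``bad region'' of $\phi_j$. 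Likewise, \emph{independent} means only that the attracting laminations differ, $\Lambda(\phi_i)\neq\Lambda(\phi_j)$; nothing is assumed about repelling laminations, invariant images, or carried leaves. So the conclusion your compactness step aims for would not contradict the theorem's hypothesis: a leaf of $\Lambda_i^+$ that is a leaf of $\Lambda_j^{-}$, or one carried by a $\phi_j$-invariant subgroup, is allowed. Only $\Lambda_i^+=\Lambda_j^+$ is excluded. Your first step is also a red herring. Since the train tracks have no illegal turns, every immersed loop is uniformly stretched (Lemma~\ref{lem:31hyp}), so there is no contracting part and no repelling lamination to construct.

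The missing idea is that elements of $\phi^k(F_n)$ look like the \emph{attracting} lamination of $\phi$ (Lemma~\ref{lem:weak_conv}). Write $w_k=\phi_i^k(u_k)$; its loop is $f_i^k(\beta_k)$. With no illegal turns, this is a legal concatenation of the paths $f_i^k(e)$, each a leaf segment of length at least $\delta_k\to\infty$. The points whose $L$-neighbourhood is not a leaf segment therefore have proportion at most $2L/\delta_k$. Hence the loops representing $w_k$ weakly converge to $\Lambda_\Gamma(f_i)$. By the same argument for $\phi_j$, followed by a change of marking, they also converge to $\Lambda_\Gamma(f_j)$. For large $k$, some segment of length $L'$ is then a leaf segment of both laminations. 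Quasi-periodicity then shows that every leaf segment of one lamination occurs in the other, i.e.\ $\Lambda(\phi_i)=\Lambda(\phi_j)$, which is exactly what independence forbids. No limit in a space of laminations is needed. Taking $N=\max_{i\neq j}N_{ij}$ (the images are nested in $k$) yields essential disjointness, and Theorem~\ref{thm:main1} applies.
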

  \setcounter{theorem}{0}
  \setcounter{section}{1}

  \subsection*{Acknowledgements}

The author thanks his PhD supervisor, Prof.~Mahan Mj, for suggesting the problem and for helpful comments on earlier drafts. The author is also grateful to Rakesh Halder and Indranil Bhattacharyya for valuable discussions. The author thanks Mladen Bestvina for the helpful comments relayed through the supervisor. The author also thanks Jean Pierre Mutanguha for comments on the initial draft.
 The author acknowledges limited use of ChatGPT and Google Gemini for grammar and language-related corrections. The author also acknowledges the support of the Department of Atomic Energy, Government of India, under Project Identification No. RTI 4014.


  \section{Background}

  \subsection{Outer endomorphisms}

  Let $\varphi,\psi:F_n\to F_n$ be two endomorphisms of the free group
  $F_n$. They are equivalent as outer endomorphisms if there is
  $g\in F_n$ with $\phi=\iota_g\circ\psi$, where
  $\iota_g(x)=gxg^{-1}$. From now on an \emph{outer endomorphism} means an
  endomorphism up to this equivalence. Here we take $F_n$ to be a free group with
  $n\ge 2$.







    \begin{definition}[Irreducible endomorphism, \cite{Mut20_irr}]
        An (outer) endomorphism $\phi: F_n \to F_n$ is \textbf{reducible} if there exists a free factorization $A_1\ast \cdots \ast A_k \ast B $, where $B$ is nontrivial if $k=1$, and a sequence of elements, $(g_i)_{i=1}^k$, in $F_n$ such that $\phi(A_i) \le g_i A_{i+1} g_i^{-1}$ where the indices are considered $\operatorname{mod} k$. An endomorphism is
        \textbf{irreducible} if it is not reducible.
    \end{definition}
    \begin{definition}[Fully irreducible endomorphism, \cite{Mut20_irr}]
        An endomorphism $\phi: F_n\to F_n$ is \textbf{fully irreducible} if $\phi$ and all of its
        iterates are irreducible. Equivalently, $\phi$ is fully irreducible if $\phi$ has no invariant proper free factor, i.e., there does not exist a proper free factor $A\le F_n$, an element $g\in F_n$, and an integer $p\ge 1$ such that $\phi^p(A) \le g A g^{-1}$.
    \end{definition}


  \subsection{Train tracks}

  Let $\Gamma$ be a finite graph without valence-one vertices (such a graph
  is called a core graph) and with a marking $R_n\to\Gamma$, where $R_n$
  denotes a rose with $n$ edges.  The marking identifies $\pi_1(\Gamma)$
  with $F_n$, and hence any homotopy equivalence $\Gamma\to\Gamma$ determines
  an outer automorphism of $F_n$.

  A map $f:\Gamma\to\Gamma$ is a \textbf{topological representative} of
  $\phi\in\mathrm{Out}(F_n)$ if $f$ sends vertices to vertices, is
  locally injective on the interiors of edges, and the induced homomorphism
  $f_*: F_n \to F_n$ represents the outer class $\phi$.

  Given a topological representative $f:\Gamma\to\Gamma$, fix an
  ordering of edges and define the transition matrix $A(f)$ by letting
  $A(f)_{ij}$ be the number of times edge $e_i$ appears in the image of
  $e_j$ (in either direction). Such a matrix is a non-negative integral
  matrix and is \emph{irreducible} if for all $i,j$ there exists
  $N(i,j)>0$ such that the $(i,j)$-entry of $A(f)^{N(i,j)}$ is positive.

  A proper subgraph is called a \emph{forest} if all its components are
  nontrivial contractible graphs (that is, trees). We define irreducibility
  for graph maps as follows:

  \begin{definition}
    A topological representative $f:\Gamma\to\Gamma$ is irreducible if
    $\Gamma$ contains no nontrivial $f$-invariant subgraph. Equivalently,
    $f$ is irreducible if and only if its transition matrix $A(f)$ is irreducible.
  \end{definition}


  \begin{definition}[Train track]
    Let $\Gamma$ be a connected finite core graph without valence-two
    vertices. A graph map $f:\Gamma\to\Gamma$ is a \textbf{train track}
    if it maps vertices to vertices and all iterates of $f$ are locally
    injective on the interiors of edges.
  \end{definition}
  We call a train-track map
  $f:\Gamma\to\Gamma$ \emph{expanding} if it is not a homeomorphism; equivalently, the
  transition matrix is \emph{irreducible} and has a real eigenvalue $\lambda_f > 1$. The
  $\lambda_f$ is the \emph{Perron-Frobenius eigenvalue}.

  Bestvina-Handel \cite{BH92} showed that irreducible automorphisms
  admit train track representatives; the argument adapts to
  irreducible endomorphisms. See also \cite{DV96} or Theorem~2.8 in \cite{Mut20_irr}.

  \begin{theorem}
    If $\phi: F_n \to F_n$ is an irreducible endomorphism, then $\phi$ can be represented by
    an irreducible train track map. The irreducible train track is expanding iff $\phi$ has infinite-order.
  \end{theorem}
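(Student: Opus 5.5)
The plan follows the Bestvina--Handel strategy, adapted to endomorphisms as in \cite{DV96} and \cite{Mut20_irr}. Start with any topological representative $f:\Gamma\to\Gamma$ of $\phi$ on a marked core graph. Such a representative exists: realize $\phi$ on the rose $R_n$ by sending each petal to the immersed loop spelling $\phi(x_i)$, then tighten. For endomorphisms $f$ need not be a homotopy equivalence, only a $\pi_1$-injective or merely homomorphism-inducing map. Since $\phi$ is irreducible, $f$ has no invariant subgraph carrying a proper free factor. After collapsing invariant forests, the transition matrix $A(f)$ is therefore irreducible in the sense of the definition above.

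Attach to each such representative its Perron--Frobenius eigenvalue $\lambda(f)$. The goal is to minimize $\lambda(f)$ over irreducible representatives. The set of possible values is discrete, because the matrices are non-negative integral of size bounded in terms of $n$ once valence-one and valence-two vertices are removed; this is where a rank bound on the graph is needed. Then run the standard moves: subdivision, valence-one homotopy, valence-two homotopy, collapsing invariant forests, and folding. Each move either preserves $\lambda$ or strictly decreases it, and each preserves the represented outer endomorphism.

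The key step is the train track property. If some iterate $f^k$ fails to be locally injective on an edge interior, then some image $f(e)$ contains an illegal turn that is eventually mapped to a backtrack. Folding the corresponding initial segments produces a new representative. After tightening and restoring irreducibility, this representative has strictly smaller $\lambda$ (or equal $\lambda$ with fewer illegal turns, in the equal case). Minimality then forces the minimizing representative to be a train track.

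The main obstacle is the non-surjective case. Folding may identify loops and decrease the rank of $\Gamma$, since $f$ is not a homotopy equivalence, so the marking argument must be handled differently. I would address this as in \cite{Mut20_irr}: view $\phi$ via a $\phi$-equivariant map of trees and allow the graph to change along the process. Irreducibility of $\phi$ (no invariant free factor system up to conjugation) is what prevents the appearance of invariant proper subgraphs after folding.

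For the last claim, let $f$ be an irreducible train track representative. If $\lambda_f=1$, the irreducible integral matrix $A(f)$ is a permutation matrix. Then $f$ permutes edges, so some power of $f$ is the identity on $\Gamma$, and $\phi$ has finite order as an outer endomorphism. Conversely, if $\lambda_f>1$, pick an edge $e$. The lengths of $f^k(e)$ grow like $\lambda_f^k$, and the train track property means no cancellation occurs. Taking a legal loop through $e$ (which exists by irreducibility), its cyclically reduced image lengths grow exponentially. Hence no power of $\phi$ equals the identity up to conjugation, so $\phi$ has infinite order.
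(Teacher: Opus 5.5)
Your overall route is the one the paper invokes: it gives no proof of its own, only citing Bestvina--Handel \cite{BH92} with the remark that the argument adapts, plus \cite{DV96} and \cite[Theorem~2.8]{Mut20_irr}. Your treatment of the last sentence ($\lambda_f=1$ forces a permutation matrix, and $\lambda_f>1$ gives exponentially growing legal loops) is fine.

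The gap is in the paragraph you label the main obstacle. You assert that folding may identify loops and lower the rank of $\Gamma$, and your only answer is ``view $\phi$ via a $\phi$-equivariant map of trees and allow the graph to change along the process.'' That is a placeholder, not an argument. It also contradicts your earlier claim that every move preserves the represented outer endomorphism. If a fold $q$ killed a loop, the resulting map would live on a graph of smaller rank and would no longer represent $\phi$ on a marked rank-$n$ graph. The same issue affects steps you take for granted:
\begin{itemize}
\item your rose representative needs $\phi(x_i)\neq 1$;
\item tightening after a fold could send edges, or even loops, to points;
\item the subgraph of eventually-trivial edges must be a forest in order to be collapsed;
\item turning an invariant subgraph into a reduction of $\phi$ needs noncontractible components to map to noncontractible ones.
\end{itemize}

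The missing idea is that, for $n\ge 2$, an irreducible endomorphism is injective; after that the Bestvina--Handel proof runs verbatim. To see injectivity, suppose $\ker\phi\neq 1$. Subdivide a representative $R_n\to R_n$ so that edges map to edges, and factor it as Stallings folds followed by an immersion. The first fold that is not $\pi_1$-injective identifies two distinct edges $e_1,e_2$ with common origin and common terminus, and kills $e_1\bar e_2$, which represents a primitive element. The earlier folds are homotopy equivalences, so some primitive $x\in F_n$ has $\phi(x)=1$. Then $A_1=\langle x\rangle$, with its nontrivial complementary factor, exhibits $\phi$ as reducible.

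Now take $f_*$ injective. Whenever Bestvina--Handel write $f=f_1\circ q$ with $q$ a fold, $q_*$ is surjective (as for every fold) and injective (because $f_*$ is), so $q$ is a homotopy equivalence. Hence the rank and marking are preserved, the edge count stays at most $3n-3$ (which your discreteness step needs), and the pretrivial subgraph is a forest.

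A minor point: the parenthetical ``or equal $\lambda$ with fewer illegal turns'' is not how the argument closes. Folding at a turn that some iterate makes degenerate, followed by tightening, strictly lowers $\lambda$. The cleanup moves do not raise $\lambda$, provided each valence-two homotopy removes the edge with the smaller eigenvector entry.
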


  For irreducible non-surjective endomorphisms one gets a special train
  track representative. The next result is due to Reynolds (
  \cite[Corollary~5.5]{Rey}) and can also be found in \cite{Mut24}. 
  \begin{theorem}\label{thm:inj_tt}
    Let $\phi:F_n\to F_n$ be a non-surjective irreducible endomorphism.
    Then there is a train track representative $f:\Gamma\to\Gamma$ which
    has no illegal turns.
  \end{theorem}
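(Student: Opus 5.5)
Starting from an arbitrary irreducible train track representative, I will modify it until no illegal turns remain, arguing by contradiction with non-surjectivity. Since $\phi$ is irreducible, the theorem above (Bestvina--Handel, adapted to endomorphisms) gives an irreducible train track map $f:\Gamma\to\Gamma$ representing $\phi$. A non-surjective irreducible endomorphism of $F_n$ cannot have finite order: if $\phi^k=\iota_g\circ\phi^j$ with $k>j$, then $\phi^j(F_n)$ would be a finitely generated subgroup mapped onto a conjugate of itself. Combined with injectivity (Hopfian-type arguments for free groups), this forces surjectivity. Hence $f$ is expanding with Perron--Frobenius eigenvalue $\lambda_f>1$.

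The main step is to show that we may choose $f$ so that $Df$ identifies no pair of directions at any vertex, i.e.\ there are no illegal turns. I use Stallings folding. Suppose some turn $\{d_1,d_2\}$ at a vertex $v$ is illegal, so that $Df^k(d_1)=Df^k(d_2)$ for some $k$. Then $f^k$ factors through a fold of initial segments of the edges determined by $d_1,d_2$. I perform the maximal such fold, obtaining $\Gamma\to\Gamma'$, and induce a map $f':\Gamma'\to\Gamma'$. There are three possible outcomes of the fold:
\begin{enumerate}
\item a nontrivial loop is collapsed, which would make $\phi$ non-injective;
\item a proper subgraph becomes invariant or a forest is collapsed, which contradicts irreducibility, or else the fold reduces to a valence-one/valence-two cleanup;
\item the fold is a homotopy equivalence that strictly decreases $\lambda_f$, or decreases some complexity such as the number of illegal turns, while keeping the Perron--Frobenius eigenvalue bounded below by the growth rate of $\phi$.
\end{enumerate}
Following the Bestvina--Handel bookkeeping, the set of possible eigenvalues of irreducible representatives is discrete (they are algebraic integers of bounded degree $\le$ the number of edges, which is at most $3n-3$). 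Therefore the process terminates.

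The key identification is that at termination, any remaining illegal turn would let us fold to produce an immersion. Concretely, consider the Stallings graph of $\phi(F_n)$: $f$ factors as folds followed by an immersion $\Gamma_\ast\to\Gamma$ representing the inclusion $\phi(F_n)\le F_n$. That inclusion is proper precisely because $\phi$ is non-surjective (up to conjugacy), so the immersion is not a covering onto a core, and in particular it is not a homotopy equivalence. I would then argue, as in Reynolds' Corollary~5.5, that a representative minimizing $\lambda_f$ in which $f$ is an immersion on each edge-path has no illegal turns, because folds can be pushed to the start of the next iterate.

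I expect the main obstacle to be showing termination: each fold must preserve irreducibility and the train track property without the invertibility available in the automorphism case. I would first try to control this via the Perron--Frobenius eigenvalue, using that it equals the growth rate of $\phi$ on conjugacy classes and hence is an invariant of $\phi$. If that control fails, I would fall back on Reynolds' approach through the action on the limiting tree $T_\phi$: there, non-surjectivity makes the stabilizers trivial and the action free, so an equivariant map to $T_\phi$ is injective on each star, and that injectivity gives the absence of illegal turns.
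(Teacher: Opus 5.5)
The paper does not prove this statement; it quotes Reynolds' Corollary~5.5. Your main argument has a genuine gap at step~(3), and the symptom is that non-surjectivity is never used. Case~(1) uses injectivity, case~(2) uses irreducibility, and case~(3) uses nothing. If the procedure worked as written, it would also turn any expanding train track of an irreducible \emph{automorphism} into one with no illegal turns. That is impossible, because a locally injective homotopy equivalence of a finite core graph is a homeomorphism.

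The complexities you propose do not decrease. Once $f$ is an irreducible train track, $\lambda_f$ equals the growth rate of $\phi$, since a legal loop is stretched by exactly $\lambda_f$. That growth rate is the minimum of the Perron--Frobenius eigenvalues over all irreducible representatives. So no fold can strictly lower $\lambda_f$, and discreteness of PF eigenvalues gives no termination. That is the Bestvina--Handel mechanism for \emph{reaching} a train track, not for removing illegal turns from one. The number of illegal turns is not monotone under folding (a partial fold creates a new vertex with new turns), and you give no reason it drops.

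Your ``key identification'' paragraph then assumes the conclusion. Every train track representative already minimizes $\lambda_f$ and is an immersion on edges. So the claim that such a representative has no illegal turns ``because folds can be pushed to the next iterate'' is the theorem itself. The factorization $f=\iota\circ\sigma$, with $\iota$ an immersion that is not a covering (as $\phi(F_n)$ has infinite index), is the right place for non-surjectivity to enter. But you never turn it into a quantity that strictly decreases when $f$ is replaced by $\sigma\circ\iota$.

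The fallback via $T_\phi$ aims at the wrong property. Freeness is not enough: for an atoroidal fully irreducible automorphism the stable tree is free with dense orbits, yet every train track representative has illegal turns. Moreover, the natural equivariant map $p:\tilde\Gamma\to T_\phi$ built from $f$ collapses germs exactly at the illegal turns of $f$. If $Df^k(d_1)=Df^k(d_2)$, then $\tilde f^k$ identifies short initial segments of the two edges, so these points are at $d_\infty$-distance $0$. Legal turns, by contrast, are not collapsed. Hence ``$p$ is injective on stars'' is equivalent to the conclusion for that particular $f$, and cannot follow from any property of the action.

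What the tree route actually needs is that $T_\phi$ is a \emph{simplicial} free tree. Then the $\phi$-twisted homothety $H:T_\phi\to T_\phi$ is injective, sends branch points to branch points, and descends to a locally injective map of the new finite graph $T_\phi/F_n$; that map is the required representative. Non-surjectivity has to be used precisely to rule out dense orbits, for instance by exploiting that $H(T_\phi)$ is an equivariantly homothetic copy of $T_\phi$ acted on by the infinite-index subgroup $\phi(F_n)$. Your sketch contains no version of this step.
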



  \subsection{Expansive maps}

  For an automorphism $\phi:G\to G$ and any $k>0$, every $x\in G$ has
  some $y$ with $\phi^k(y)=x$. This can fail for non-surjective
  endomorphisms. For irreducible non-surjective endomorphisms a stronger
  property holds.

  \begin{definition}[Expansive, \cite{Rey}]
    Let $\mathcal{B}$ be a basis of $F_n$ and $\phi:F_n\to F_n$ an
    endomorphism. We say $\phi$ is \emph{expansive} with respect to $\mathcal B$
    if for any $K>0$ there exists $M>0$ so that for any
    $x\in F_n\setminus\{e\}$ we have
    $\|\phi^m(x)\|_{\mathcal B}\ge K$ whenever $m\ge M$.
  \end{definition}

  The expansive property is independent of the generating set chosen. There is a dichotomy for irreducible endomorphisms (
  \cite[Proposition~3.11]{Rey}).

  \begin{proposition}
    Let $\phi:F_n\to F_n$ be an irreducible endomorphism. Then either
    $\phi$ is an automorphism or $\phi$ is expansive.
    
  \end{proposition}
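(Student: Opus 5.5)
The plan is to argue by contradiction, comparing the growth of word length under iterates of $\phi$ with the structure of the group. Fix a basis $\mathcal B$ of $F_n$ and write $\|\cdot\|$ for reduced word length. Suppose $\phi$ is irreducible but not an automorphism, so it is non-surjective: an injective non-surjective endomorphism is what the dichotomy is about, and a non-injective one has nontrivial kernel, which gives an invariant structure contradicting irreducibility. By Theorem~\ref{thm:inj_tt}, $\phi$ is represented by a train track map $f:\Gamma\to\Gamma$ with no illegal turns. Changing the generating set only changes lengths up to bi-Lipschitz constants, so it suffices to prove expansiveness for cyclically reduced edge-path length in $\Gamma$, after adjusting for conjugation.

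The key observation is that, since $f$ has no illegal turns, every reduced edge path in $\Gamma$ is legal. Hence $f^m$ maps any reduced path to a reduced path with no cancellation. The image of a path $\gamma$ has combinatorial length $\sum_{e\in\gamma}|f^m(e)|$, and each edge length $|f^m(e)|\ge 1$ is nondecreasing in $m$. Moreover $f$ is not a homeomorphism, since $\phi$ is not surjective, so the transition matrix is irreducible with Perron--Frobenius eigenvalue $\lambda_f>1$, as noted after the definition of expanding train tracks. Irreducibility of $A(f)$ then forces $|f^m(e)|\to\infty$ for every edge $e$: some power $A^N$ has every entry positive, and an edge that has stopped growing would give a nonempty proper invariant subgraph of non-growing edges. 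Choose $M$ so that $|f^m(e)|\ge K$ for all edges $e$ and all $m\ge M$.

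The remaining step is to pass from path length to the length of the group element $\phi^m(x)$. For nontrivial $x$, represent the conjugacy class of $x$ by an immersed loop $\gamma$ in $\Gamma$. Then $f^m(\gamma)$ is again an immersed loop. The turn at the basepoint or closing point is also legal, so no cyclic cancellation occurs, and it has length at least $K$. Thus the cyclic length of $\phi^m(x)$ is at least $K$ up to the bi-Lipschitz constant. Since word length dominates cyclic length, $\|\phi^m(x)\|_{\mathcal B}\ge K/C - C$, and choosing $K$ large gives the claim uniformly in $x$. A different representative of the outer class changes $\phi^m$ by a conjugation, which does not affect cyclic length. The bound on cyclic length is what we need, because the definition is stated for an honest endomorphism $\phi$ and $\|\phi^m(x)\|\ge$ cyclic length always holds.

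The main obstacle is the claim that every edge grows. In particular, we must rule out edges whose iterates remain of bounded length, which would be periodic edges mapped homeomorphically. I would handle this via irreducibility of the train track: the set of edges $e$ with $\sup_m |f^m(e)|<\infty$ spans an $f$-invariant subgraph. By irreducibility, this subgraph is either empty or all of $\Gamma$. In the latter case $f$ permutes edges, so it is a homeomorphism, and then $\phi$ is an automorphism, contradicting our assumption.
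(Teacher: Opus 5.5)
The paper gives no proof of this proposition; it imports it from \cite[Proposition~3.11]{Rey}. Your argument therefore takes a different route: you deduce the dichotomy from the immersion train track of Theorem~\ref{thm:inj_tt}. The core of that deduction is sound. With no illegal turns, $f$ is an immersion. So for $x\neq e$ the immersed loop $\gamma$ carrying $[x]$ has $f^m(\gamma)$ immersed of length $\sum_{e\in\gamma}|f^m(e)|$. Cyclic length is conjugation invariant and bounds word length from below, so uniform edge growth gives expansiveness.

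What this buys is a very short argument. What it costs is that all the content now sits inside Theorem~\ref{thm:inj_tt}, a strictly stronger statement. In \cite{Rey} that statement is Corollary~5.5, which comes after Proposition~3.11. Unless you check that its proof does not use expansiveness, your argument is circular rather than an independent proof. A useful reformulation for a self-contained attempt: $\phi$ is expansive iff $\phi$ is injective and $\bigcap_m\phi^m(F_n)=\{e\}$. This holds because balls in $F_n$ are finite and the images $\phi^m(F_n)$ decrease.

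Even granting Theorem~\ref{thm:inj_tt}, three steps need repair.

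First, that theorem does not say the representative has irreducible transition matrix, and not being a homeomorphism does not imply it. The remark after the definition of expanding maps presupposes an irreducible train track. An immersion can have an invariant forest, as the proof of Lemma~\ref{lem:31hyp} acknowledges. A forest edge has $|f^m(e)|$ bounded by the number of forest edges, so your choice of $M$ with $|f^m(e)|\ge K$ for \emph{all} edges can fail. You can either collapse a maximal invariant forest first, or note that you only need every immersed loop to cross a growing edge. For the latter: the non-growing edges span an invariant subgraph. Its non-contractible components are permuted by the $\pi_1$-injective map $f$, so they would give a periodic proper free factor system. By irreducibility of $\phi$ this subgraph is therefore a forest. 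It cannot be all of $\Gamma$: then $f$ would eventually send edges to single edges, and irreducibility again forces $f$ to be a graph automorphism.

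Second, irreducible does not mean primitive, so your claim that some $A^N$ is entrywise positive is false in general. A positive left Perron--Frobenius eigenvector $v$ gives instead $|f^m(e_j)|\ge\lambda_f^m v_j/\max_i v_i$.

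Third, your remark that a nontrivial kernel contradicts irreducibility is unjustified as written, but it is also unnecessary. $F_n$ is Hopfian, so a non-automorphism is non-surjective, and Theorem~\ref{thm:inj_tt} applies. The immersion it provides is automatically $\pi_1$-injective.
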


  \subsection{Laminations}

  A brief introduction to laminations follows; see Bestvina-Feighn-Handel
  \cite{BFH97} or \cite{Mut20_irr, lam1} for more information. Throughout this section, unless specified otherwise, we will assume that $f:\Gamma\to\Gamma$ is a train-track representative of a \emph{fully irreducible} endomorphism $\phi: F_n\to F_n$.

    Let $f: \Gamma\to\Gamma$ be a train-track map with irreducible transition matrix and let $x\in \Gamma$ be a $f$-periodic point in interior the of some edge $e$. If $\lambda > 1$ is the expansion factor of $f$ and $U$ be the $\epsilon >0$ neighborhood of $x$. Then for some $N>0$, we have that
    $f^N(U) \supset U$ and we can choose an isometric immersion $\ell: (-\epsilon, \epsilon)\to \Gamma$.
    Extend it to a locally isometric immersion $\ell: \mathbb{R}\to \Gamma$ such that $\ell(\lambda^Nt) =f^N\ell(t)$. We say $\ell$ is obtained by \emph{iterating a neighborhood} of $x$.

    \begin{definition}[\cite{BFH97}]
        Two isometric immersions $[a, b]\to \Gamma$ and $[c, d]\to \Gamma$ are called \emph{equivalent} if there is an isometry $[a, b] \to [c, d]$ making the triangle commute.
    \end{definition}
    Let $P$ represent the equivalence class of isometric immersion $\gamma: [a, b] \to \Gamma$. Then by $f(P)$ we denote the equivalence class $f\circ\gamma$ pulled tight and
    scaled so it is an isometric immersion. A \emph{leaf segment} of an isometric immersion
    $\mathbb{R}\to\Gamma$ is the equivalence class of the restriction to a finite interval.
    Suppose $\ell, \ell': \mathbb{R}\to \Gamma$ are two isometric immersions obtained from iterating the $f$-periodic points $x$ and $x'$, respectively. We say $\ell$ and $\ell'$ are \emph{(weakly) equivalent} if a leaf segment of $\ell$ is also a leaf segment of $\ell'$ and vice versa.

    The following can be found in \cite{BFH97}, as updated in the erratum \cite{BFH_err}.
    \begin{lemma}[Lemma~1.2, \cite{BFH97}]
        Let $f: \Gamma\to\Gamma$ be a train track map with a fully-irreducible transition matrix. Let $\ell$ and $\ell'$ be two isometric immersions obtained from iterating
        $f$-periodic points $x$ and $x'$ in the interior of some edge of $\Gamma$, respectively. Then $\ell$ and $\ell'$ are (weakly) equivalent.
    \end{lemma}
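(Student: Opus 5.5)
The aim is to show that every leaf segment of $\ell$ is a leaf segment of $\ell'$; the converse follows by symmetry. The argument rests on two facts: the leaf segments of $\ell$ are exactly the subpaths of iterates of edges, and irreducibility of the transition matrix lets any edge be found inside a high iterate of any other edge.

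\emph{Step 1: leaf segments of $\ell$ are subpaths of iterates of a single edge.} Let $x$ lie in the interior of $e$, with $f^N(U)\supset U$ for $U$ the $\epsilon$-neighborhood of $x$, and $\ell(\lambda^N t)=f^N\ell(t)$. Since $f$ is a train track, $f^{kN}$ is locally injective on the interior of $e$, so $f^{kN}|_U$ is an immersion. By construction $\ell([-\lambda^{kN}\epsilon,\lambda^{kN}\epsilon])$ is $f^{kN}(U)$, rescaled and pulled tight, and no tightening actually occurs. A finite leaf segment $P$ of $\ell$ lies in $\ell([-R,R])$ for some $R$. Choosing $k$ with $\lambda^{kN}\epsilon>R$ then shows that $P$ is a subpath of $f^{kN}(U)\subseteq f^{kN}(e)$. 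Here we use the standard identification of leaf segments with subpaths of the edge paths $f^m(e)$, up to the scaling convention in the definition of $f(P)$.

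\emph{Step 2: every edge occurs in a high iterate of $e'$.} Let $x'$ lie in the interior of $e'$. Irreducibility of $A(f)$ gives $M$ such that $f^M(e')$ crosses $e$. Since $f^{kN}(e)$ is a path with no cancellation, $f^{M+kN}(e')$ contains $f^{kN}(e)$ as a subpath, and hence contains $P$. Fully irreducible is needed only to ensure that the matrix of every power $f^{N'}$ is irreducible, so that we may work with the power $f^{N'}$ that fixes both $x$ and $x'$ and thereby synchronize the periods.

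\emph{Step 3: $f^{m}(e')$ occurs inside $\ell'$ for every $m$.} This is the main obstacle. The leaf $\ell'$ only sees the forward iterates of a small neighborhood $U'$ of $x'$, not of the whole edge $e'$. To handle this, use irreducibility once more: pick $L$ such that $f^{L}(U')$ crosses some full edge. Such an $L$ exists because $f$ is expanding, so the lengths of the images $f^{kN'}(U')$ grow like $\lambda^{kN'}$, while every edge has bounded length. Then find $L'$ such that this full edge maps over $e'$ under $f^{L'}$. Consequently $f^{L+L'}(U')\supseteq e'$ as an immersed subpath, and therefore $f^{L+L'+m}(U')\supseteq f^m(e')$.

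Two points need care in this step. First, the exponents must be taken to be multiples of the common period $N'$, so that the images $f^{jN'}(U')$ are genuine leaf segments of $\ell'$ through $x'$. This can be arranged by using irreducibility of the transition matrix of $f^{N'}$, which holds by full irreducibility. Second, the images must remain immersed. This holds because $f$ is a train track, so all iterates are locally injective on edge interiors, and iterates of legal paths stay legal.

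Combining the three steps: $P\subseteq f^{kN}(e)\subseteq f^{M+kN}(e')\subseteq f^{j}(U')$ for a suitable $j$, and $f^j(U')$ is a leaf segment of $\ell'$. Hence every leaf segment of $\ell$ is a leaf segment of $\ell'$, and by symmetry $\ell$ and $\ell'$ are weakly equivalent.
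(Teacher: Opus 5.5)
The paper gives no proof of this lemma; it only cites Lemma~1.2 of \cite{BFH97} as updated in the erratum. Your argument is correct and is the standard one.

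\begin{itemize}
\item Every leaf segment of $\ell$ lies in some iterate of the neighborhood of $x$, hence in an iterate of its edge $e$.
\item Expansion and irreducibility produce an iterate of the neighborhood $U'$ of $x'$ that contains $e$ as a subpath.
\item Train-track local injectivity guarantees that no cancellation occurs when you iterate further.
\end{itemize}

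You also use the hypothesis in the right place. Full irreducibility is what lets you pass to a common period $g=f^{N'}$ fixing both $x$ and $x'$ while keeping $A(g)$ irreducible. It cannot be weakened to irreducibility of $A(f)$ alone. If $A(f)$ is irreducible but imprimitive, leaves through edges in different cyclic classes cross disjoint sets of edges, so they are not equivalent.

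Two tidy-ups would make the write-up cleaner.
\begin{enumerate}
\item Replace $f$ by $g=f^{N'}$ at the very start, and choose all of $M$, $L$, $L'$ using irreducibility of $A(g)$ rather than $A(f)$. This loses nothing, because the leaves are unchanged. If $N\mid N'$, then $\ell(\lambda^{N}t)=f^{N}\ell(t)$ implies $\ell(\lambda^{N'}t)=f^{N'}\ell(t)$, and the extension from the same germ is unique.
\item Step~2 is redundant. In Step~3 you can choose $L'$ so that $g^{L'}(e'')$ crosses $e$ directly, instead of routing through $e'$. Then $g^{L+L'+k}(U')\supseteq g^{k}(e)\supseteq P$ for every leaf segment $P$ of $\ell$.
\end{enumerate}
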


    \begin{definition}[Lamination, \cite{BFH97}]
        The \textbf{stable lamination} or the \textbf{attracting lamination} $\Lambda=\Lambda_{\Gamma}(f)$ is the equivalence class of isometric immersions containing some (any) immersion obtained by iterating a neighborhood of a periodic point as above. A \emph{leaf} of $\Lambda$ is an immersion representing $\Lambda$. A \emph{leaf segment} is a leaf segment of a (any) leaf $\ell$ of $\Lambda$. 
     \end{definition}

    \begin{definition}[Weak convergence, \cite{BFH97}]
        We say that a sequence $\{\alpha_i\}$ of isometric immersions of $S^1_i\to \Gamma$
        (where the metric on $S^1_i$ is the scalar multiple of standard path metric, and the scale depends on $i$) \emph{weakly converges} to $\Lambda_{\Gamma}(f)$ if the following holds ($m$ denotes the scaled Lebesgue measure):

        For every $L > 0$, the ratio
        $$
            \frac{
            m\big(\{ x \in S_i^1 \mid \text{the } L\text{-nbhd of } x \text{ is a leaf segment}         \}\big)
        }{
        m(S_i^1)
        }
        $$
        converges to $1$ as $ i \to \infty $.
    \end{definition}
    The following result can be found in \cite{BFH97}.
    \begin{proposition}
        Suppose $\alpha$ is an immersed loop in $\Gamma$ representing a $f$-non-periodic conjugacy class. Then the sequence $[f^i(\alpha)]$ of tightenings of $f^i(\alpha)$
        weakly converges to $\Lambda_{\Gamma}(f)$.
    \end{proposition}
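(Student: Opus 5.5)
The plan is to follow the Bestvina–Feighn–Handel argument for automorphisms, using the train track structure. Write $f:\Gamma\to\Gamma$ for an expanding irreducible train track representative of $\phi$ with Perron–Frobenius eigenvalue $\lambda>1$, and give $\Gamma$ the metric in which each edge $e_i$ has length equal to the $i$-th entry of a positive Perron–Frobenius eigenvector. Then every legal path $\sigma$ satisfies $\ell(f^k\sigma)=\lambda^k\ell(\sigma)$. For a general immersed loop $\alpha$, I decompose the tightened iterate $[f^i(\alpha)]$ into maximal legal segments separated by illegal turns. By bounded cancellation, tightening $f(\alpha)$ only cancels near the illegal turns, and the number of illegal turns does not increase under iteration. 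Hence, after finitely many iterations, either the number of illegal turns strictly drops, or it stabilizes and the illegal turns become periodic, eventually forming periodic indivisible Nielsen paths.

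Next I would rule out the periodic case. If the illegal turns stabilize forever, then $\alpha$ would be eventually a concatenation of periodic Nielsen paths and legal segments. Since $\alpha$ is a non-periodic conjugacy class, at least one legal segment must be present. In the non-surjective setting, Theorem~\ref{thm:inj_tt} lets me pick $f$ with no illegal turns at all, which removes Nielsen paths entirely. In that case $[f^i(\alpha)]=f^i(\alpha)$ is already legal for all $i$, and the only remaining task is to show that legal loops become leaf-like. For the fully irreducible hypothesis, I may need to pass to a representative for which the lamination is defined; I will assume the $f$ fixed in this section can be taken to be the one from Theorem~\ref{thm:inj_tt} (when $\phi$ is non-surjective), and otherwise I will handle illegal turns by the counting argument above.

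The core step is to show that, for a legal loop $\beta$ of bounded length, the proportion of $f^i(\beta)$ covered by points whose $L$-neighborhood is a leaf segment tends to $1$. Fix $L$. By primitivity of the transition matrix (full irreducibility gives irreducibility of all powers, so $A(f)$ is primitive), there is a $k$ such that $f^k(e)$ crosses every edge for every edge $e$. Any leaf segment of length at most $L$ appears inside some $f^{m}(e)$, and legality means $f^{i}(\beta)$ is the concatenation of the paths $f^{i}(e_j)$ over the edges $e_j$ of $\beta$, with no cancellation. Points lying in the interior of some $f^{i-m}(e_j')$-block, at distance at least $L$ from its endpoints, have $L$-neighborhoods that are subsegments of $f^{m}$ of an edge, hence leaf segments. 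This uses the fact that $f^m(e)$ for any edge $e$ is a leaf segment, which follows from iterating a periodic point and Lemma~1.2. The bad set consists of $2L$-neighborhoods of the block endpoints. With $m$ chosen so that $\lambda^m\min\ell(e)\gg L$, the bad proportion is at most $2L/(\lambda^m \min \ell(e))$, which I can make smaller than any $\epsilon$. Choosing $m$ first and then letting $i\to\infty$ gives the weak convergence.

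For loops with illegal turns, the number of illegal turns is bounded by $C$, while the length of the legal pieces grows like $\lambda^i$ unless the loop is Nielsen. The bad set near the illegal turns therefore has bounded measure, whereas the total length tends to infinity. The main obstacle is this non-legal case: I need to show that the legal segments grow at all, that is, that $\ell([f^i\alpha])\to\infty$. For this I would cite Reynolds' expansiveness (Proposition above) in the non-surjective case. In the automorphism case I would use non-periodicity of $\alpha$ to exclude the situation where every legal segment is absorbed into Nielsen paths.
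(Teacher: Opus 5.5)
Your outline is essentially the Bestvina--Feighn--Handel argument that the paper only cites: eventual splitting of $[f^k(\alpha)]$ into legal pieces and periodic Nielsen paths, followed by the $f^m$-block count on legal pieces, which is the same count the paper runs in Lemma~\ref{lem:weak_conv}. In the non-surjective case the immersion representative of Theorem~\ref{thm:inj_tt} makes every loop legal, so the argument goes through. The one thing to tidy is your ``main obstacle'': once the splitting has a legal piece, that piece already grows by $\lambda$ per iterate, so growth is not separate. What Reynolds' expansiveness is actually needed for in the non-surjective case is ruling out that some $[\phi^k(x)]$ is periodic: if $\phi^p(h)=chc^{-1}$ then $\iota_{c^{-1}}\circ\phi^p$ fixes $h$, contradicting expansiveness. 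As you invoke it, expansiveness only bounds the length of elements, not the cyclic length of their conjugacy classes.
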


    \begin{definition}[Quasi-periodicity, \cite{BFH97}]
        An isometric immersion $\mathbb{R}\to\Gamma$ is called \emph{quasi-periodic} if for
        every $L>0$, there exists $L'> L$ such that every leaf segment of length $L$ occurs
        as a sub-leaf segment of any leaf segment of $\ell$ of length $L'$.
    \end{definition}
    \begin{proposition}[\cite{BFH97}]
        Every leaf of $\Lambda_{\Gamma}(f)$ is quasi-periodic.
    \end{proposition}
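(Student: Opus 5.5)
The plan is to run the standard Perron--Frobenius argument for the leaf $\ell$ constructed by iterating a neighborhood of an $f$-periodic point $x$. Three facts from the setting will be used: the transition matrix $A(f)$ is irreducible, $f$ is a train track map, and $\lambda>1$.

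\textbf{Step 1: reduce to a primitive power.} Replacing $f$ by $f^N$ changes neither the lamination (same leaf $\ell$) nor the train track property. Since $\phi$ is fully irreducible, all powers of $f$ have irreducible transition matrices. Hence $A(f)$ is primitive: there is $p$ with every entry of $A(f)^p$ positive. Equivalently, for every pair of edges $e,e'$, the path $f^p(e)$ crosses $e'$. Because $f$ is a train track map, $f^{k}(e)$ is an immersed path, and therefore is a leaf segment up to the scaling convention.

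\textbf{Step 2: every leaf segment lies inside some $f^k(e)$.} Let $\sigma$ be a leaf segment of $\ell$ of length $L$. By construction, $\ell(\lambda^{Nj}t)=f^{Nj}\ell(t)$, so $\ell$ is the increasing union of the immersed paths $f^{Nj}(\ell|_{(-\epsilon,\epsilon)})$. Choose $j$ large enough that $\sigma$ lies in the image of $[-\lambda^{Nj}\epsilon,\lambda^{Nj}\epsilon]$. The interval $(-\epsilon,\epsilon)$ lies in the single edge $e_0$ containing $x$. Therefore $\sigma$ is a subpath of $f^{k}(e_0)$ with $k=Nj$, and $k$ depends only on $L$, not on the particular $\sigma$.

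\textbf{Step 3: long segments cover every edge at the required depth.} Take $L'$ large and let $\tau$ be any leaf segment of length $L'$. Again $\tau$ lies in some $f^{m}(e_0)$ with $m\ge k+p$. Write
\[
f^{m}(e_0)=f^{k+p}\bigl(f^{m-k-p}(e_0)\bigr).
\]
The path $f^{m-k-p}(e_0)$ is a concatenation of edges with legal turns. Hence $f^{m}(e_0)$ is a concatenation of the blocks $f^{k+p}(e')$ over edges $e'$ with no cancellation, each block of length at most $C=\lambda^{k+p}\max_{e}\operatorname{len}(e)$. If $L'\ge 2C+\ell_{\min}$ times a constant, then $\tau$ contains an entire block $f^{k+p}(e')=f^{k}\bigl(f^{p}(e')\bigr)$. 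By primitivity, $f^p(e')$ crosses $e_0$, so this block contains $f^k(e_0)$ as a subpath, and by Step 2 it therefore contains $\sigma$. Note that blocks at the ends of $\tau$ may be partial, which is why $L'$ must exceed twice the maximal block length. This gives the required $L'$, uniform in $\sigma$ and $\tau$.

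\textbf{Main obstacle.} The delicate point is to make Step 3 precise for the metric and scaling conventions: leaves are isometric immersions, and $f(P)$ is rescaled. The key fact needed is that $f^{k}$ applied to a legal path has no cancellation, so that it is literally the concatenation of the edge images. The train track property supplies this, and Theorem~\ref{thm:inj_tt} even gives a representative with no illegal turns at all. I would fix the metric in which $f$ stretches each edge uniformly by $\lambda$ (given by the Perron--Frobenius eigenvector), so that the block lengths are exactly $\lambda^{k+p}\operatorname{len}(e')$ and the bounds are clean.
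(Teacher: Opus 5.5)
The paper gives no proof of this statement; it is quoted from \cite{BFH97}. Your outline is the standard argument behind it: pass to a primitive power, place every leaf segment inside some $f^k(e_0)$, then note that a long leaf segment contains a whole block $f^{k+p}(e')\supseteq f^k(e_0)$. There is, however, a gap at exactly the point where uniformity is needed. Quasi-periodicity requires one $L'$ that works for \emph{all} leaf segments $\sigma$ of length $L$, so the exponent $k$ in Step 2 must depend only on $L$. You choose $j$ ``large enough that $\sigma$ lies in the image of $[-\lambda^{Nj}\epsilon,\lambda^{Nj}\epsilon]$'', and that choice depends on where $\sigma$ occurs along $\ell$. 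Nothing in your argument stops segments of the same length from first occurring arbitrarily far from $0$, so the sentence ``$k$ depends only on $L$'' is asserted, not proved. There are uncountably many leaf segments of length $L$, because the offset inside an edge varies continuously, so some reduction to finitely many cases is unavoidable.

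The repair is a finiteness argument on edge paths. Since $\ell(0)=x$ is interior to $e_0$, $\ell$ traverses all of $e_0$, say $\ell|_{[a,b]}=e_0$ with $a<0<b$. Then $f^{Nj}(e_0)=\ell|_{[\lambda^{Nj}a,\lambda^{Nj}b]}$, and these paths are nested and exhaust $\ell$. Every leaf segment of length $L$ lies in a subpath of $\ell$ made of at most $L/\ell_{\min}+2$ full edges, where $\ell_{\min}$ is the shortest edge length. $\Gamma$ has only finitely many edge paths of that combinatorial length. Each one occurring in $\ell$ lies in some $f^{Nj}(e_0)$, and the maximum of these finitely many $j$ gives $k=k(L)$. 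With this inserted, Step 3 works as written; an occurrence of $f^k(\bar e_0)$ is harmless, since equivalence of leaf segments allows orientation-reversing isometries.

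Smaller points:
\begin{itemize}
\item Primitivity of $A(f)$ needs an argument from full irreducibility. If $A(f)$ has period $d>1$, its cyclic classes of edges are proper $f^d$-invariant subgraphs. A class that is a forest is impossible, since immersed paths $f^{dj}(e)$ of length growing like $\lambda^{dj}$ cannot live in a finite forest. A class containing a circuit yields, after a further power, an invariant proper free factor.
\item Say explicitly that any leaf has the same leaf segments as $\ell$, so the result for $\ell$ gives it for every leaf.
\item ``Immersed, therefore a leaf segment'' in Step 1 is a non sequitur. $f^k(e)$ is a leaf segment because $e$ occurs in $f^p(e_0)$, and $f^{k+p}(e_0)$ is a subpath of $\ell$. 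You never use this, however.
\end{itemize}
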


    Now suppose $\Gamma'$ be another marked graph of same rank of $\Gamma$ and $\tau: \Gamma \to \Gamma'$ be homotopy equivalence corresponding to the difference of the two markings. For any isometric immersion $\ell: \mathbb{R} \to \Gamma$, by $\tau(\ell)$ we denote the unique (up to precomposition by an isometry of $\mathbb{R}$) isometric immersion of $\mathbb{R}\to \Gamma'$ proper homotopy equivalent to $\tau\circ\ell$. We
    have the following result from \cite{BFH97}.

    \begin{lemma}[Lemma~1.10, \cite{BFH97}]
    \begin{enumerate}
        \item If $\ell, \ell': \mathbb{R}\to \Gamma$ are equivalent, then $\tau(\ell)$ and $\tau(\ell')$ are also equivalent.
        \item If $\ell$ is quasi-periodic then so is $\tau(\ell)$.
    \end{enumerate}
    \end{lemma}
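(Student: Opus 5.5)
The statement to prove is Lemma~1.10 of \cite{BFH97}, i.e.\ that the change-of-marking map $\tau$ preserves equivalence and quasi-periodicity of leaves. The plan is to reduce both parts to the \emph{bounded cancellation lemma} for $\tau$. Lift $\tau$ to a quasi-isometry $\tilde\tau:\tilde\Gamma\to\tilde\Gamma'$ of universal covers; this is possible since $\tau$ is a homotopy equivalence of finite graphs. There is then a constant $C=C(\tau)$ with the following property. If $\gamma$ is an immersed path in $\Gamma$ (finite or bi-infinite) and $\sigma$ is a subpath, then the tightened image $[\tau(\sigma)]$, with its ends trimmed by $C$, is a subpath of $[\tau(\gamma)]$. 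This is the standard Cooper/BFH bounded cancellation statement. The key structural observation is a \emph{locality} property: the portion of $\tau(\ell)$ lying "over" a point of $\ell$ depends only on a bounded neighborhood of that point in $\ell$. Precisely, every leaf segment of $\tau(\ell)$ of length $L$ is contained in $[\tau(\sigma)]$ for some leaf segment $\sigma$ of $\ell$ of length $\le L''(L)$. Moreover, up to $C$-trimming, that segment is determined by $\sigma$ alone.

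For part (1), let $\ell,\ell'$ be equivalent and take a leaf segment $Q$ of $\tau(\ell)$. I would choose a finite leaf segment $\sigma$ of $\ell$ such that the trimmed tightening $[\tau(\sigma)]$ minus $C$ at both ends contains $Q$. Such a $\sigma$ exists because $\tilde\tau$ is a quasi-isometry: the projection of $\tilde\tau(\tilde\ell)$ onto the geodesic $\widetilde{\tau(\ell)}$ is coarsely monotone, so a long enough segment of $\ell$ covers $Q$ with margin $C$. Since $\ell$ and $\ell'$ are equivalent, $\sigma$ is also a leaf segment of $\ell'$. Bounded cancellation applied to $\sigma\subset\ell'$ then shows that the trimmed $[\tau(\sigma)]$, and hence $Q$, is a leaf segment of $\tau(\ell')$. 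The same argument with the roles of $\ell$ and $\ell'$ swapped completes the proof of (1).

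For part (2), let $L>0$ and let $L_1$ be the length of $\ell$ needed to cover any length-$L$ segment of $\tau(\ell)$ as above. Note that $L_1$ is uniform, depending only on the quasi-isometry constants, because there are finitely many combinatorial types in a finite graph. Quasi-periodicity of $\ell$ gives $L_1'$ such that every leaf segment of $\ell$ of length $L_1$ appears in every leaf segment of $\ell$ of length $L_1'$. Now take any segment $R$ of $\tau(\ell)$ of length $L'$, with $L'$ chosen large in terms of $L_1'$ and the quasi-isometry constants. Then $R$ contains the trimmed image of some segment $\rho$ of $\ell$ of length $L_1'$. This uses the reverse direction of coarse monotonicity, namely that long segments of $\tau(\ell)$ contain images of long segments of $\ell$, which again follows since $\tilde\tau^{-1}$ is also a quasi-isometry. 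A given length-$L$ segment $Q$ of $\tau(\ell)$ comes from some $\sigma$ of length $L_1$, and $\sigma$ occurs inside $\rho$. Bounded cancellation then places $Q$ inside $R$.

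The main obstacle is making the "coverage" step rigorous, i.e.\ the claim that segments of $\tau(\ell)$ correspond coarsely and monotonically to segments of $\ell$ with uniform length control in both directions. I would handle it in the universal covers. Bi-infinite quasi-geodesics in trees lie at bounded Hausdorff distance from the geodesic with the same endpoints. The nearest-point projection of $\tilde\tau(\tilde\ell(t))$ to that geodesic is then coarsely increasing in $t$, with error controlled by the Lipschitz constant of $\tilde\tau$ and the bounded cancellation constant. That control yields $L_1$ and $L'$ explicitly.
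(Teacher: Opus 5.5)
Your argument is correct; note that the paper itself gives no proof of this lemma and simply quotes it from \cite{BFH97}, where it is deduced from bounded cancellation for the change of marking, which is the route you take. One point to tighten in (2): $R$ should contain the whole part of $[\tau(\rho)]$ lying along $\tau(\ell)$ (the $C$-trimmed image enlarged by up to $C$ at each end), not just the $C$-trimmed image, because the trimmed image of a subsegment $\sigma$ near an end of $\rho$ can protrude by up to $C$ past the trimmed image of $\rho$; your freedom in choosing $L'$ large absorbs this.
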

    This allows us to define the following:
    \begin{definition}[\cite{BFH97}]
        The attracting lamination of $f:\Gamma\to\Gamma$ in the $\Gamma'$-\emph{coordinates} is the equivalence class $\Lambda_{\Gamma'}(f)$ containing
        $\tau(\ell)$ for some (any) leaf $\ell$ of $\Lambda_{\Gamma}(f)$.
    \end{definition}
    \begin{lemma}[Lemma~1.11, \cite{BFH97}]
        Let $\alpha$ be a loop in $\Gamma$ represent a $f$-non-periodic conjugacy class.
        Then the sequence $\{[\tau g^i(\alpha)]\}$ weakly converges to $\Lambda_{\Gamma'}(f)$.
    \end{lemma}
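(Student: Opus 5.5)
The plan is to deduce the statement from the corresponding convergence in $\Gamma$ (the Proposition stating that $[f^i(\alpha)]$ weakly converges to $\Lambda_{\Gamma}(f)$), transported to $\Gamma'$ via bounded cancellation for the homotopy equivalence $\tau$. Here I read $g^i$ in the statement as $f^i$. Throughout, $\alpha_i$ denotes the tightened loop $[f^i(\alpha)]$.

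\emph{Properties of $\tau$.} Assume, after a homotopy, that $\tau$ is Lipschitz with constant $K$ and sends vertices to vertices. By Cooper's bounded cancellation lemma there is a constant $C$ with the following property: for any immersed path $\sigma$ in $\Gamma$, deleting the $C$-neighbourhoods of the endpoints of $[\tau(\sigma)]$ leaves a subpath of $[\tau(\beta)]$ for every immersed path or loop $\beta$ containing $\sigma$.

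\emph{Key local claim.} Apply this with $\beta$ a leaf $\ell$ of $\Lambda_\Gamma(f)$. If $\sigma$ is a leaf segment of length $L_0$, then $[\tau\sigma]$ minus its $C$-ends is a leaf segment of $\tau(\ell)$, hence of $\Lambda_{\Gamma'}(f)$. The same trimmed path also survives as a subpath of the loop $[\tau \alpha_i]$ whenever $\sigma \subset \alpha_i$. Since $\tau$ is a quasi-isometry on universal covers, $|[\tau\sigma]| \ge L_0/K' - C'$. Consequently, for $L_0$ large compared to $L$, $C$ and $K'$, every point of the trimmed path shrunk by a further $L$ at each end has an $L$-neighbourhood in $[\tau\alpha_i]$ that is a leaf segment of $\Lambda_{\Gamma'}(f)$.

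\emph{Measure count.} Fix $L$ and $\epsilon>0$, and choose $L_0$ large. Since $\alpha$ is $f$-non-periodic, expansiveness of the train track gives $|\alpha_i| \to \infty$. Partition $\alpha_i$ into consecutive arcs of length about $L_0$, and call an arc \emph{good} if it is a leaf segment.
\begin{enumerate}
\item By weak convergence in $\Gamma$, applied with window size $L_0$, the bad arcs have total length at most $\epsilon|\alpha_i|$ for $i$ large.
\item Each bad arc contributes at most $K$ times its length to $[\tau\alpha_i]$.
\item Each good arc contributes at most $2(C+L)$ of bad measure near its ends.
\item Junction effects between arcs are also $O(C+L)$ per arc.
\end{enumerate}
Hence the bad measure in $[\tau\alpha_i]$ is at most
\[
K\epsilon|\alpha_i| + \frac{|\alpha_i|}{L_0}\cdot O(C+L+K).
\]
For the denominator, lengths of conjugacy classes in $\Gamma$ and $\Gamma'$ are comparable, so $|[\tau\alpha_i]| \ge |\alpha_i|/K' - C''$. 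Taking $\epsilon$ small and $L_0$ large makes the bad ratio arbitrarily small, which gives weak convergence to $\Lambda_{\Gamma'}(f)$.

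\emph{Main obstacle.} The delicate step is the bookkeeping at junctions between arcs of $\alpha_i$. Cancellation in $\tau\circ\alpha_i$ could in principle propagate past $C$ only if long backtracking occurred, and bounded cancellation, applied to the whole loop $\alpha_i$, rules this out. Some care is also needed so that the quasi-isometry constants are uniform when passing from paths to cyclically reduced loops.
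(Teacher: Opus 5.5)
Your argument is correct and is essentially the standard proof behind the cited Lemma~1.11 of \cite{BFH97}, which the paper quotes without proof. That proof covers most of $[f^i(\alpha)]$ by long leaf segments using weak convergence in $\Gamma$, pushes those segments to $\Gamma'$ with bounded cancellation, and compares lengths of conjugacy classes in $\Gamma$ and $\Gamma'$; you are also right to read $g^i$ as $f^i$. The junction bookkeeping you flag resolves cleanly in the universal cover: the tightened image of the periodic line lies in the union of the tightened images of the arcs, so every point of $[\tau\alpha_i]$ is already charged to some arc and no separate junction term is needed.
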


    \begin{lemma}
        Let $f': \Gamma'\to\Gamma'$ be another train-track map representing the same outer
        endomorphism of $F_n$ as $f:\Gamma\to\Gamma$. Then $\Lambda_{\Gamma'}(f)=\Lambda_{\Gamma'}(f')$.
    \end{lemma}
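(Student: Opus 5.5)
The idea is to characterize $\Lambda_{\Gamma'}(f)$ intrinsically, as the weak limit in $\Gamma'$ of iterates of a loop, and then observe that this limit depends only on the outer endomorphism $\phi$.

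Let $\tau:\Gamma\to\Gamma'$ be the change-of-marking homotopy equivalence. Since $f$ and $f'$ both represent $\phi$, the maps $\tau\circ f$ and $f'\circ\tau$ induce the same outer endomorphism. Hence they are freely homotopic, and so they agree on conjugacy classes. By induction, $\tau\circ f^i \simeq f'^i\circ\tau$ for all $i$. So for any immersed loop $\alpha$ in $\Gamma$, the tightened loops $[\tau f^i(\alpha)]$ and $[f'^i(\tau\alpha)]$ coincide in $\Gamma'$.

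Next choose $\alpha$ representing a $\phi$-non-periodic conjugacy class. Such a class exists: since $\phi$ is fully irreducible and of infinite order, only finitely many conjugacy classes are periodic, so most loops work. Periodicity is a property of the conjugacy class under $\phi$, so $\tau\alpha$ is also non-periodic for $f'$.

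Now apply the two weak-convergence results stated earlier.
\begin{itemize}
\item By Lemma~1.11 applied to $f$, the sequence $[\tau f^i(\alpha)]$ weakly converges to $\Lambda_{\Gamma'}(f)$.
\item By the earlier proposition on weak convergence applied to the train track $f'$ on $\Gamma'$, with identity change of marking, the sequence $[f'^i(\tau\alpha)]$ weakly converges to $\Lambda_{\Gamma'}(f')$.
\end{itemize}
By the first paragraph these are the same sequence of loops.

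The main obstacle is showing that weak limits are unique: if one sequence of loops weakly converges to two laminations $\Lambda_1,\Lambda_2$ in $\Gamma'$, then $\Lambda_1=\Lambda_2$. I would argue via leaf segments as follows.
\begin{enumerate}
\item Fix $L$. For large $i$, almost all points of the loop have an $L$-neighbourhood that is simultaneously a leaf segment of $\Lambda_1$ and of $\Lambda_2$. So some length-$L$ segment $\sigma_L$ is common to both laminations.
\item By quasi-periodicity of leaves, take $L$ to be the constant $L'$ associated to a given length $L_0$. Then $\sigma_L$ contains every length-$L_0$ leaf segment of $\Lambda_1$.
\item Since $\sigma_L$ is also a segment of $\Lambda_2$, every leaf segment of $\Lambda_1$ is a leaf segment of $\Lambda_2$.
\item The symmetric argument gives the reverse inclusion.
\end{enumerate}
Hence $\Lambda_1$ and $\Lambda_2$ have the same leaf segments, so they are equivalent in the sense defined above.

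A secondary point to check is that Lemma~1.11 and the weak-convergence proposition remain valid for non-surjective endomorphisms. This requires that the iterates of a non-periodic loop do not degenerate, which the expansive property of irreducible non-surjective endomorphisms should guarantee.
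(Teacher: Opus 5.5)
Your proposal is correct and follows essentially the same route as the paper, which simply defers to the Bestvina--Feighn--Handel argument: choose a loop $\alpha$ in a non-periodic conjugacy class, observe $[\tau f^i(\alpha)]=[f'^i\tau(\alpha)]$, apply Lemma~1.11 and the weak-convergence proposition, and conclude via uniqueness of weak limits, which you correctly derive from quasi-periodicity (the same argument the paper later uses to prove that independent endomorphisms are essentially disjoint). One small inaccuracy does not affect the proof: a fully irreducible $\phi$ can have infinitely many periodic conjugacy classes (in the geometric automorphism case all powers of the boundary class are periodic, while in the non-surjective setting of this paper there are none at all), but you only need one non-periodic class, and that always exists.
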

    \begin{proof}
        The proof is identical to the automorphism case treated in \cite[Lemma~2.12]{BFH97}. 
    \end{proof}

    \begin{definition}[Attracting lamination]
        Let $\phi: F_n\to F_n$ be a fully irreducible outer endomorphism with an expanding
        train-track map $f:\Gamma\to\Gamma$. The \emph{attracting lamination} $\Lambda(\phi)$ of $\phi$ is
        the collection $\{\Lambda_{\Gamma'}(f)|\, \Gamma' \text{ is a marked \(\mathbb R\)-graph}\}$.
    \end{definition}

    \begin{definition}[Independent endomorphisms]

    Let $\phi, \psi : F_n \to F_n$ be fully irreducible outer endomorphisms.
    Let $f:\Gamma\to\Gamma$ and $g:\Gamma'\to\Gamma'$
    be expanding train–track representatives of $\phi$ and $\psi$
    on marked graphs $\Gamma$ and $\Gamma'$, respectively.
    We say that $\phi$ and $\psi$ are \emph{independent} if
    $\Lambda_\Gamma(f)$ and $\Lambda_\Gamma(g)$ are not equivalent.
    We write $\Lambda(\phi)\neq \Lambda(\psi)$ in this case.
    A collection of fully-irreducible, outer endomorphisms $\phi_1, \ldots, \phi_r: F_n\to F_n$ are called \emph{independent}, if they are mutually independent.
    \end{definition}




  \subsection{Essentially disjoint endomorphisms}

  \begin{definition}[Essentially disjoint endomorphisms]
    Let $\phi_1,\ldots,\phi_r:F_n\to F_n$ be endomorphisms. They are
    \emph{essentially disjoint} if some $N\ge 1$ satisfies
    \begin{equation*}
      \phi_i^N(F_n)\cap g\phi_j^N(F_n)g^{-1}=\{e\}
      \quad\forall g\in F_n,\ i\neq j.
    \end{equation*}
  \end{definition}

  Replacing each endomorphism by a power does not affect hyperbolicity,
  so we may assume $N=1$. An essentially disjoint collection can consist
  of non-surjective endomorphisms.

  \subsection{Pullbacks}

  Given continuous maps $f:X\to Z$ and $g:Y\to Z$ of spaces, the pullback
  $X\times_Z Y$ is defined by
  \begin{equation*}
    X\times_Z Y=\{(x,y)\in X\times Y: f(x)=g(y)\}.
  \end{equation*}

  We define pullbacks when we restrict ourselves to only graphs and graph morphism between them.
  The following definition and the statements are due to \cite{Mut20}.

  \begin{definition}[Pullback, \cite{Mut20}]
    Suppose $f:\Gamma\to\Gamma$ is a graph map. Define
    $\Gamma_i\coloneqq\Gamma\times_{f^i}\Gamma$ and let $\Gamma_0$ be
    the diagonal of $\Gamma\times\Gamma$. For $i>0$ set
    $\hat{\Gamma}_i\coloneqq\Gamma_i\setminus\Gamma_{i-1}$. We sometimes
    write $\widehat{\Gamma}(f)$ to emphasise dependence on $f$.
  \end{definition}

  There is a map $f^i:\hat{\Gamma}_j\to\hat{\Gamma}_{j-i}$ for $j>i$.
  When $f$ is injective we have:

  \begin{lemma}[cf. Lemma~3.3 in \cite{Mut20}]
    Suppose $f:\Gamma\to\Gamma$ is an immersion. Then $\Gamma_i$ is a
    union of components of $\Gamma_{i+1}$.
  \end{lemma}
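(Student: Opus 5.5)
The plan is to work directly with the fiber-product description. A point of $\Gamma_{i+1}=\Gamma\times_{f^{i+1}}\Gamma$ is a pair $(x,y)$ with $f^{i+1}(x)=f^{i+1}(y)$. Since $f^i(x)=f^i(y)$ implies $f^{i+1}(x)=f^{i+1}(y)$, we have $\Gamma_i\subseteq\Gamma_{i+1}$; for $i=0$ this is just the diagonal. Both are graphs, with the product-type cell structure on pullbacks of graph maps: vertices are pairs of vertices, and edges are pairs of edges whose images agree as paths, or suitable subdivisions thereof. So $\Gamma_i$ is a closed subgraph of $\Gamma_{i+1}$.

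It then suffices to show that $\Gamma_i$ is also open in $\Gamma_{i+1}$. Then $\Gamma_i$ is a union of connected components, because a subset of a graph that is both open and closed is a union of components.

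Openness is a local question at each point $(x,y)\in\Gamma_i$. Take a small neighborhood $U$ of $(x,y)$ in $\Gamma_{i+1}$. Write a nearby point as $(x',y')$ with $x'$ near $x$, $y'$ near $y$, and $f^{i+1}(x')=f^{i+1}(y')$; we want $f^i(x')=f^i(y')$. Put $z=f^i(x)=f^i(y)$. By continuity, $f^i(x')$ and $f^i(y')$ both lie in a small neighborhood $V$ of $z$. The key input is that $f$ is an immersion, so it is locally injective: choose $V$ small enough that $f|_V$ is injective. Then
\[
f\bigl(f^i(x')\bigr)=f\bigl(f^i(y')\bigr)
\]
with both arguments in $V$, which forces $f^i(x')=f^i(y')$. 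Hence $U\subseteq\Gamma_i$.

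The only subtle step is getting local injectivity of $f$ at vertices, not just on edge interiors. That is exactly what the immersion hypothesis provides: $f$ is locally injective on edges and injective on the set of directions at each vertex, so the star neighborhood of any point maps injectively. The continuity requirement (that $f^i(U)$ lands in $V$ once $U$ is small) is immediate, since $f^i$ is continuous and $\Gamma$ is compact.

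Finally, closedness of $\Gamma_i$ follows because it is the preimage of the diagonal under the continuous map $(x,y)\mapsto(f^i(x),f^i(y))$ and $\Gamma$ is Hausdorff. So $\Gamma_i$ is clopen in $\Gamma_{i+1}$, and therefore a union of its components.
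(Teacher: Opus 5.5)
Your proof is correct. The paper gives no argument of its own here; it only cites Lemma~3.3 of \cite{Mut20}, so there is no in-paper proof to compare against, and your clopen argument is the standard self-contained one.

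Closedness of $\Gamma_i$ in $\Gamma_{i+1}$ holds because $\Gamma_i$ is the preimage of the diagonal under $(x,y)\mapsto(f^i(x),f^i(y))$. Openness comes from local injectivity of $f$ at $z=f^i(x)=f^i(y)$, which lets you cancel the last application of $f$ for all nearby pairs. Equivalently, you are showing that the diagonal is clopen in $\Gamma\times_f\Gamma$, which is the only place the immersion hypothesis enters, and then pulling it back along the continuous map $f^i\times f^i\colon\Gamma_{i+1}\to\Gamma\times_f\Gamma$. The same idea can also be phrased as unique path lifting through the immersion $f$.

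A few small imprecisions, none of them a gap:
\begin{enumerate}
\item Only a sufficiently small star neighborhood of a vertex maps injectively. The full open star need not, since images of distinct edges can overlap away from the vertex.
\item Compactness of $\Gamma$ is not needed; continuity of $f^i$ at $x$ and $y$ suffices to choose $U$.
\item Your description of the cell structure on the pullback is loose, since it may require subdivision and can contain isolated vertices. The argument never uses it, because being clopen already forces $\Gamma_i$ to be a union of components of $\Gamma_{i+1}$.
\end{enumerate}
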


  \begin{lemma}[cf. Lemma 3.4 in \cite{Mut20}]
    If $f:\Gamma\to\Gamma$ is an immersion and $\widehat{\Gamma}_i$ is
    empty, then so are $\widehat{\Gamma}_j$ for $j>i$. If
    $\widehat{\Gamma}_i$ consists of loops then so do $\widehat{\Gamma}_j$
    for $j>i$.
  \end{lemma}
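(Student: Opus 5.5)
The plan is to use the map $f\times f$, which sends $(x,y)\mapsto (f(x),f(y))$, and to check two things: that it carries $\hat{\Gamma}_j$ into $\hat{\Gamma}_{j-1}$ for every $j\ge 2$, and that it is an immersion there. Both claims then follow by induction on $j$.

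\emph{Step 1: $f\times f$ maps $\hat\Gamma_j$ into $\hat\Gamma_{j-1}$.} Let $(x,y)\in\hat{\Gamma}_j$, so that $f^j(x)=f^j(y)$ but $f^{j-1}(x)\neq f^{j-1}(y)$. Put $x'=f(x)$ and $y'=f(y)$. Then $f^{j-1}(x')=f^{j-1}(y')$, while $f^{j-2}(x')=f^{j-1}(x)\neq f^{j-1}(y)=f^{j-2}(y')$. Hence $(x',y')\in\hat{\Gamma}_{j-1}$, and $f\times f$ restricts to a map $\hat{\Gamma}_j\to\hat{\Gamma}_{j-1}$. Consequently, if $\hat\Gamma_j\neq\emptyset$ then $\hat\Gamma_{j-1}\neq\emptyset$. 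Taking the contrapositive and inducting upward from $i$ proves the emptiness statement.

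\emph{Step 2: the restriction is an immersion of graphs.} By the preceding lemma, $\Gamma_{j-1}$ is a union of components of $\Gamma_j$. So $\hat{\Gamma}_j$ is itself a union of components of $\Gamma_j$, and in particular a subgraph. Since $f$ is an immersion, so is $f^j$. The pullback $\Gamma\times_{f^j}\Gamma$ then carries the standard graph structure: a vertex $(x,y)$ has directions given by pairs $(d,d')$ of directions at $x$ and $y$ with $Df^j(d)=Df^j(d')$. In this structure both coordinate projections are immersions. The map $f\times f$ sends the direction $(d,d')$ to $(Df(d),Df(d'))$, and this is injective on directions because $Df$ is injective at each vertex. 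Hence $f\times f:\hat{\Gamma}_j\to\hat{\Gamma}_{j-1}$ is an immersion.

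\emph{Step 3: loops propagate.} Suppose every component of $\hat{\Gamma}_{j-1}$ is a loop (a circle). Take a component $C$ of $\hat{\Gamma}_j$. It is a finite connected graph, and it immerses into a circle component of $\hat\Gamma_{j-1}$. An immersion into a circle forces every vertex of $C$ to have valence at most $2$. So $C$ is a circle, an arc, or a point, and a circle component immerses as a finite cover. Induction on $j$ starting from $j=i+1$ then gives the second claim, provided arcs and points are excluded.

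The main obstacle is exactly this exclusion of arcs and isolated vertices: valence-one vertices can, a priori, appear in a pullback. I would handle it by adopting the convention of \cite{Mut20}, where $\Gamma_i$ is replaced by its core (discarding contractible components and pruning valence-one vertices), so that "consists of loops" refers to core components. Alternatively, one can note that a tree component of $\hat\Gamma_j$ carries no fundamental group and is irrelevant to the later use of the lemma.

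If a direct argument is wanted instead, I would first examine a valence-one vertex $(x,y)$ of $\hat\Gamma_j$ directly. Its unique direction maps to some direction at a vertex of a circle in $\hat\Gamma_{j-1}$. I would then try to lift the second direction at that vertex back to $(x,y)$, using that $\Gamma$ has no valence-one vertices and that $f$ is an immersion. I expect this lifting to fail in general, which is why I would rely on the core convention.
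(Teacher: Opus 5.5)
Your route is the natural one: show that $f\times f$ restricts to an immersion $\hat\Gamma_j\to\hat\Gamma_{j-1}$, then read off both claims. The paper gives no proof of its own, only the citation to \cite{Mut20}. You are also right that the loop statement needs pullbacks to be replaced by their cores. In fact the whole paper needs this convention. For an expanding irreducible train track map, $f$ is surjective on points but not injective, so the set-theoretic $\hat\Gamma_i$ is never empty, and Corollary~\ref{cor:gammahat} would then be false.

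The one thing to repair is that you adopt the core convention only in Step~3. Once $\hat\Gamma_i$ means the core, your pointwise Step~1 no longer proves the first claim. A nonempty set-theoretic $\hat\Gamma_j$ may be a forest, for instance isolated points $(x,y)$ where $f(x)=f(y)$ is a vertex and $Df(T_x)\cap Df(T_y)=\emptyset$. The same issue affects Step~3 in the core setting: you cannot assume a component of $\hat\Gamma_j$ lands on a circle. The image of that component only lies in the set-theoretic $\hat\Gamma_{j-1}$, which may carry hanging trees or tree components.

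Both places need the same standard fact. An immersion carries immersed loops to immersed loops, so it maps the core of $\hat\Gamma_j$ into the core of $\hat\Gamma_{j-1}$. Equivalently, immersions are $\pi_1$-injective, so the rank of a component cannot increase. With this, a nontrivial core in $\hat\Gamma_j$ forces a nontrivial core in $\hat\Gamma_{j-1}$. Likewise, a core component of $\hat\Gamma_j$ immerses into a circle, and having no vertices of valence $0$ or $1$, it is itself a circle. Your Step~2 already supplies the immersion, so this is a short addition rather than a missing idea.
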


  This motivates:

  \begin{definition}[\cite{Mut20}]
    For an immersion $f:\Gamma\to\Gamma$, \textbf{pullbacks stabilise} if
     $\widehat{\Gamma}_i=\varnothing$ for some $i>0$.
  \end{definition}

  \begin{proposition}[cf. Proposition 3.11 in \cite{Mut20}]
    Let $f:\Gamma\to\Gamma$ be an immersion. If $\widehat{\Gamma}_i$ is
    non-empty for all $i$ then there is an invariant loop; hence
    $\pi_1(M_f)$ contains some Baumslag--Solitar $BS(1,d)$ subgroup.
  \end{proposition}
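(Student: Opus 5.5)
We must prove Proposition (cf. 3.11 in Mut20): if $f:\Gamma\to\Gamma$ is an immersion and $\widehat{\Gamma}_i\neq\varnothing$ for all $i$, then there is an invariant loop, and hence $\pi_1(M_f)$ contains some $BS(1,d)$.

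The plan is a finiteness and pigeonhole argument on the pullback graphs. The key structural input is the map $f:\widehat{\Gamma}_{j}\to\widehat{\Gamma}_{j-1}$, $(x,y)\mapsto(f(x),f(y))$. It is well defined because $f^{j-1}(f x)=f^{j}(x)=f^{j}(y)=f^{j-1}(f y)$. Moreover, if $(fx,fy)$ lay in $\Gamma_{j-2}$, then $(x,y)\in\Gamma_{j-1}$, contradicting $(x,y)\in\widehat{\Gamma}_j$.

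Since $f$ is an immersion, $\Gamma\times_{f^j}\Gamma$ is a graph and its projections to $\Gamma$ are immersions. So every component of $\Gamma_j$ immerses into $\Gamma$, and the induced map between components of pullbacks is itself an immersion. By Lemma 3.3 (cf.\ Mut20), $\widehat{\Gamma}_j$ is a union of components of $\Gamma_j$. Each $\Gamma_j$ is a finite graph, since it is a subgraph of $\Gamma\times\Gamma$ with its product cell structure, subdivided finitely.

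\textbf{Step 1: pass to non-tree components.} First I would show that components of $\widehat{\Gamma}_j$ that are trees cannot persist indefinitely. A finite tree immersed into a core graph via the projection must have valence-one vertices. A valence-one vertex $(x,y)$ with $x\neq y$ is incompatible with the pullback structure: every direction at $x$ pairs with some direction at $y$ having the same $f^j$-image. This uses that $f^j$ is locally surjective on directions, or at least that both coordinates see all directions; a more robust route is to use $\chi$ and rank instead. By Lemma 3.4, the alternative is that $\widehat{\Gamma}_j$ consists of loops from some stage on, or else has components of negative Euler characteristic.

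The cleanest route is to measure complexity $c_j=\sum_{C\subset\widehat\Gamma_j}(-\chi(C))$ together with the number of components. Since $\Gamma\times\Gamma$ is a fixed finite complex, $\Gamma_j\subseteq\Gamma\times\Gamma$ increases with $j$. The sets $\Gamma_j$ therefore form an increasing chain of subgraphs of a finite graph, so they stabilize: $\Gamma_j=\Gamma_{j+1}$ for large $j$. Hence $\widehat{\Gamma}_{j+1}=\varnothing$ as a subgraph. The hypothesis $\widehat{\Gamma}_i\neq\varnothing$ for all $i$ then forces infinitely many new points at all levels.

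This contradiction suggests the subdivision used in the pullback must be refined with $j$: $\Gamma_j$ lives in $\Gamma\times\Gamma$ subdivided by $f^{-j}(\text{vertices})$. I would therefore work with the finitely many components up to combinatorial type instead. The key point is that the component graphs have bounded rank, since $\mathrm{rk}\le$ rank of subgroup intersections by Hanna Neumann. Each component $C$ of $\widehat\Gamma_j$ corresponds to an intersection $F_n\cap gF_ng^{-1}$ computed through $f^j$, i.e.\ to the subgroup of elements $u$ with $f^j_*(u)=f^j_*(gug^{-1})$ up to conjugacy.

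\textbf{Step 2: pigeonhole.} Using Lemma 3.4 and the maps $f:\widehat\Gamma_j\to\widehat\Gamma_{j-1}$, I would build an infinite backward chain of components $C_j\subset\widehat\Gamma_j$ with $f(C_{j})\subseteq C_{j-1}$ by a König's lemma argument, which works since each level has finitely many components. Each $C_j$ carries a cyclic or free subgroup $H_j\le F_n$, and $f_*(H_j)\le H_{j-1}$ up to conjugacy.

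Because $f_*$ is injective and the ranks are bounded, the immersed core graphs $C_j\to\Gamma$ eventually repeat combinatorially. I would use the fact that immersed core graphs of bounded rank with edges of bounded length in $\Gamma$ are finite in number; this needs care because lengths grow. At a repetition we get a core graph $C$ and a $k$ with $f^k$ mapping the conjugacy class of $\pi_1 C$ into itself. Non-emptiness at all levels combined with Lemma 3.4 forces the eventual components to be loops. Indeed, a non-loop component would yield, via rank and Euler characteristic monotonicity, an invariant proper subgroup that contradicts finiteness, which reduces to the loop case. A loop $\alpha$ with $f^k_*(\alpha)$ conjugate to $\alpha^{d}$ is the invariant loop.

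\textbf{Step 3: the Baumslag--Solitar subgroup.} If $f^k_*(a)=g a^{d} g^{-1}$ with $a\neq 1$, then in $\pi_1(M_f)=\langle F_n,t\mid tut^{-1}=f_*(u)\rangle$ put $s=g^{-1}t^k$. This gives $sas^{-1}=a^{d}$, so $\langle a,s\rangle$ is a quotient of $BS(1,d)$. Britton's lemma shows it is isomorphic to $BS(1,d)$, since $a$ has infinite order and $s$ is a stable letter.

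The main obstacle is Step 2: making the finiteness that yields a repetition rigorous. Lengths in $\widehat\Gamma_j$ grow with $j$, so I would rescale by $f^j$ and argue via conjugacy classes of subgroups instead of graphs. The idea is that the backward chain's subgroups $H_j$ have bounded rank and nested images, and that a non-cyclic chain contradicts $\widehat\Gamma_j$ eventually consisting of loops as in Lemma 3.4.
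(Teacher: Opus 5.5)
The paper gives no proof of this proposition; it quotes it from [Mut20]. Your proposal does not prove it either. The whole content of the statement is getting from ``$\widehat\Gamma_j\neq\varnothing$ for all $j$'' to a repetition, and your Step 2 only asserts that the K\"onig chain $C_j$ ``eventually repeats combinatorially''. You never exhibit a quantity that provably lies in a finite set.

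\textbf{The repetition step.} Bounded rank does not supply finiteness: already in rank one there are infinitely many immersed loops. The graphs $C_j$ really do grow. Take the rose with $f(a)=a^2$, $f(b)=b$:
\begin{itemize}
\item $\widehat\Gamma_j$ is the disjoint union of the $2^{j-1}$ circles $\{(s,s+k/2^j)\}$ with $k$ odd.
\item Each circle has $2^j$ edges in the pullback structure.
\item $f\times f$ maps each circle of $\widehat\Gamma_{j+1}$ onto one of $\widehat\Gamma_j$ as a double cover.
\end{itemize}
So the chain $C_j$ never repeats; only the projected loop $p_1(C_j)=a$ does. In general nothing in your argument bounds $l(p_1(C_j))$ or the length of its root. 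You flag this yourself (``this needs care because lengths grow''), but the rescaling idea is never turned into an argument. Your earlier claim that the $\Gamma_j$ stabilise as subsets of $\Gamma\times\Gamma$ fails for the same reason, as you noticed.

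\textbf{The reduction to loops.} The lemma you cite as Lemma 3.4 only says that once $\widehat\Gamma_i$ consists of loops it stays that way. It does not say that non-loop components eventually disappear. There is also no Euler characteristic monotonicity to appeal to: $f\times f\colon C_{j+1}\to C_j$ is an immersion, so $\pi_1(C_{j+1})$ is a subgroup of $\pi_1(C_j)$, and subgroups of free groups can have larger rank.

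Even granting a repetition of a non-cyclic $C$, you would only get $\phi^k(A)\le gAg^{-1}$ for $A=\pi_1(C)$. This does not by itself give a periodic conjugacy class, because the injective endomorphism $x\mapsto g^{-1}\phi^k(x)g$ of $A$ need not have one. The phrase ``an invariant proper subgroup that contradicts finiteness'' does not say what is contradicted.

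\textbf{What is missing.} You need data that provably ranges over a finite set, together with an argument that what repeats is cyclic. A natural candidate is data read off in the fixed finite graph $\widehat\Gamma_1$, into which every $\widehat\Gamma_j$ immerses via $(f\times f)^{j-1}$.

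\textbf{Step 3.} This part is fine. If $d<0$, replace $k$ by $2k$ so the exponent becomes $d^2>0$ and you obtain $BS(1,d^2)$ rather than $BS(1,d)$ with negative $d$.
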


  \begin{corollary}\label{cor:gammahat}
    Let $\phi:F_n\to F_n$ be non-surjective, irreducible, hyperbolic and
    let $f:\Gamma\to\Gamma$ be an injective train track representative.
    Then there is $N>0$ such that $\widehat{\Gamma}_1(f^N)=\varnothing$.
  \end{corollary}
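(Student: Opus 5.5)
We will deduce the corollary from the last proposition together with the two lemmas on immersions, after first arranging that the train track map is an immersion.

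\textbf{Step 1: the representative is an immersion.} By Theorem~\ref{thm:inj_tt} we may take $f:\Gamma\to\Gamma$ to be a train track representative of $\phi$ with no illegal turns. A train track map is locally injective on edge interiors. Having no illegal turns means no two distinct directions at a vertex are identified by $Df$. Hence $f$ is locally injective everywhere, i.e.\ an immersion. The same holds for every iterate $f^k$: each $f^k$ is a train track map with no illegal turns, since a composition of immersions is an immersion.

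\textbf{Step 2: pullbacks stabilise.} Suppose $\widehat{\Gamma}_i(f)\neq\varnothing$ for all $i$. By the proposition (cf.\ Proposition~3.11 in \cite{Mut20}), $\pi_1(M_f)\cong F_n\ast_\phi$ would then contain a Baumslag--Solitar subgroup $BS(1,d)$. This contradicts the hyperbolicity hypothesis on $\phi$, namely that the ascending HNN extension contains no $BS(1,m)$. So there is $i_0$ with $\widehat{\Gamma}_{i_0}(f)=\varnothing$. By the lemma (cf.\ Lemma~3.4 in \cite{Mut20}), $\widehat{\Gamma}_j(f)=\varnothing$ for all $j\ge i_0$. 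Since $\widehat{\Gamma}_j=\Gamma_j\setminus\Gamma_{j-1}$, this gives $\Gamma_j=\Gamma_{i_0-1}$ for all $j\ge i_0-1$.

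\textbf{Step 3: pass to the power $N$.} Let $N\ge i_0$ be arbitrary (say $N=i_0$). For $g=f^N$ we have
\[
\Gamma_1(g)=\Gamma\times_{f^N}\Gamma=\Gamma_N(f)
\qquad\text{and}\qquad
\Gamma_0(g)=\Gamma_0(f)=\text{diagonal}.
\]
Thus
\[
\widehat{\Gamma}_1(f^N)=\Gamma_N(f)\setminus\Gamma_0(f),
\]
and we must show that $\Gamma_N(f)$ equals the diagonal. The stabilisation in Step~2 only gives $\Gamma_N(f)=\Gamma_{i_0-1}(f)$, a stable graph that need not be the diagonal. This is the main obstacle.

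\textbf{Step 4: the stable graph is the diagonal.} Let $(x,y)\in\Gamma_{i_0-1}(f)\setminus\Gamma_0$, so $x\neq y$ and $f^{i_0-1}(x)=f^{i_0-1}(y)$. Let $j\ge1$ be minimal with $f^j(x)=f^j(y)$; then $(x,y)\in\widehat{\Gamma}_j(f)$ with $j\le i_0-1$. By Step~2, $\Gamma_{i_0-1}(f)$ is stable under increasing the index, so its off-diagonal components persist in every $\Gamma_k(f)$ with $k\ge i_0-1$.

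The plan is to show that an off-diagonal component $C$ which persists forever produces a nontrivial loop. The map $f\times f$ sends $\widehat{\Gamma}_j$ into $\widehat{\Gamma}_{j-1}$. Since $f$ is an immersion, the projections of $C$ are immersions, so $C$ carries a loop in $\Gamma$ with a nontrivial conjugacy class identified with another under $f^j$. Iterating this along the persistent stable components should yield a periodic pattern of such components, and hence an invariant loop as in \cite[Proposition~3.11]{Mut20}, which again gives a $BS(1,d)$ subgroup.

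A contractible (tree) component of $\Gamma_{i_0-1}\setminus\Gamma_0$ is handled by compactness together with $\widehat{\Gamma}_{i_0}=\varnothing$. I expect the intended reading of the source result to be that pullback stabilisation means $\Gamma_i(f^N)$ is exactly the diagonal for large $N$, and that this is what \cite{Mut20} records. I would check the precise formulation there. If that reading holds, Step~4 reduces to citing that statement, and $\widehat{\Gamma}_1(f^N)=\varnothing$ follows.
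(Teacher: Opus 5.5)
\textbf{The gap is in Step 4, and it cannot be closed.} Your Steps 1--3 are correct. Step 2 is everything the cited results give. The paper itself gives no proof of this corollary; it just places it after the Proposition (cf.\ Proposition 3.11 of \cite{Mut20}).

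Step 4 fails for two reasons. First, persistence of an off-diagonal component is automatic. If $f^j(x)=f^j(y)$ then $f^k(x)=f^k(y)$ for every $k\ge j$, so every component of every $\widehat{\Gamma}_j(f)$ survives in all later $\Gamma_k(f)$, whether or not $\phi$ is hyperbolic. Second, $f\times f$ carries $\widehat{\Gamma}_j(f)$ down to $\widehat{\Gamma}_{j-1}(f)$ and reaches the diagonal after $j$ steps, so no periodic pattern or invariant loop arises. The invariant loop of Proposition 3.11 needs $\widehat{\Gamma}_i(f)\neq\varnothing$ for all $i$, which Step 2 has already excluded.

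Your computation in Step 3 actually shows
\[
\widehat{\Gamma}_1(f^N)=\Gamma_N(f)\setminus\Gamma_0=\bigsqcup_{j=1}^{N}\widehat{\Gamma}_j(f)\supseteq\widehat{\Gamma}_1(f),
\]
so the conclusion for some $N$ is equivalent to $\widehat{\Gamma}_1(f)=\varnothing$. With the literal fibre product this fails for every non-surjective $\phi$, because an injective immersion of a finite core graph to itself is a homeomorphism. So the tree parts you hope to discard by compactness are always present. With cores, as in \cite{Mut20}, the condition says exactly that $\phi(F_n)$ is malnormal. Stabilisation of pullbacks means $\widehat{\Gamma}_i(f)=\varnothing$ for large $i$, not $\Gamma_i(f)=\Gamma_0$, so the reading you anticipate is not what \cite{Mut20} records.

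\textbf{The statement is false, so no argument can repair Step 4.} On the rose with edges $a,b$, let $f(a)=ab^2a$ and $f(b)=ba^2b$.
\begin{enumerate}
\item $Df$ is the identity on the four directions, so $f$ is a train track with no illegal turns. Its transition matrix is positive, and $ab\notin\phi(F_2)$, so $\phi$ is non-surjective.
\item Since $f$ is an immersion, cyclically reduced words map to cyclically reduced words, and cyclic length is multiplied by exactly $4$. So $[\phi^k(g)]=[g^d]$ forces $d=4^k$.
\item In that case $\sigma^k$ would send the bi-infinite periodic word $g^{\infty}$ to a shift of itself. Here $\sigma=\mu^2$ is the square of the Thue--Morse substitution $\mu$, extended to inverse letters.
\item Then $g^{\infty}$ would contain the cube-free words $\sigma^{kl}(x)^{\pm1}$ for every $l$. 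But every subword of $g^{\infty}$ of length $3|g|$ is a cube, a contradiction.
\end{enumerate}
Hence $\phi$ is hyperbolic. It is also fully irreducible: a periodic proper free factor of $F_2$ is cyclic, and would produce such a $g$ (after doubling $k$ if necessary).

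Yet $f(b)$ is $f(a)$ rotated by two edges, i.e.\ $\phi(b)=(ab)^{-1}\phi(a)(ab)$. This gives an embedded circle in $\Gamma\times_f\Gamma$ missing the diagonal. It lies in $\widehat{\Gamma}_1(f)\subseteq\widehat{\Gamma}_1(f^N)$ for every $N$.

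What is true is exactly your Step 2: $\widehat{\Gamma}_N(f)=\varnothing$ for $N\ge i_0$. Proposition~\ref{prop:admis}, which invokes this corollary to exclude $D_1D_1^{-1}$, would have to be rebuilt on that weaker statement.
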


  \subsection{Hyperbolic endomorphisms}

  It is a well-known result that given an atoroidal automorphism of a free group
  $\varphi: F_n\to F_n$, the extension $F_n*_{\varphi}$ is hyperbolic (\cite{Br00}).
  We have a similar result when we have endomorphisms which says:

  \begin{theorem}[\cite{Mut20}]
    Let $\phi:F_n\to F_n$ be fully irreducible, non-surjective endomorphism. The
    following are equivalent:
    \begin{enumerate}
      \item $F_n*_{\phi}$ is word-hyperbolic.
      \item $F_n*_{\phi}$ has no $BS(1,d)$ subgroups for $d\ge 1$.
      \item There are no $k,d\ge 1$ and nontrivial $g\in F_n$ with
            $[\phi^k(g)]=[g^d]$($[x]$ denoting conjugacy class of $x$).
      \item If $f:\Gamma\to\Gamma$ is a locally injective train track
            representative then $f$ is $(\lambda,n)$-hyperbolic and some
            pull-back stabilises.
    \end{enumerate}
  \end{theorem}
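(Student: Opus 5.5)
The plan is to prove the cycle $(1)\Rightarrow(2)\Rightarrow(3)\Rightarrow(4)\Rightarrow(1)$. Throughout, $t$ denotes the stable letter of $F_n*_\phi$, so $tgt^{-1}=\phi(g)$ for $g\in F_n$.

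\emph{$(1)\Rightarrow(2)$.} Every infinite-order element of a word-hyperbolic group has positive stable translation length. In $BS(1,d)=\langle a,s\mid sas^{-1}=a^d\rangle$ the element $a$ has infinite order and is conjugate to $a^d$.
\begin{itemize}
\item If $d\ge 2$, this forces $\tau(a)=d\,\tau(a)$, so $\tau(a)=0$, which is a contradiction.
\item If $d=1$, the group is $\mathbb{Z}^2$, which cannot embed in a hyperbolic group.
\end{itemize}

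\emph{$(2)\Rightarrow(3)$ (contrapositive).} Suppose $\phi^k(g)=hg^dh^{-1}$ with $g\neq e$, and put $s=h^{-1}t^k$. Then $sgs^{-1}=g^d$, so there is a homomorphism $BS(1,d)\to\langle g,s\rangle$ with $a\mapsto g$ and $s\mapsto s$. To show it is injective, compose with the $t$-exponent map to $\mathbb{Z}$; this sends $s$ to $k\neq 0$. The kernel of $BS(1,d)\to\mathbb{Z}$ is $\bigcup_m s^{-m}\langle a\rangle s^m\cong\mathbb{Z}[1/d]$. It maps to the ascending union of the cyclic groups $t^{-km}\langle h_m g\,h_m^{-1}\rangle t^{km}$. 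Each of these is infinite cyclic, because $F_n$ is torsion-free and $\phi$ is injective. A nontrivial element of $\mathbb{Z}[1/d]$ is a nonzero power of some $s^{-m}as^m$, so it maps to a nonzero power of an infinite-order element, and the map is injective on the kernel. Injectivity on the quotient $\mathbb{Z}$ holds since $k\neq0$. Hence $BS(1,d)$ embeds, contradicting (2).

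\emph{$(3)\Rightarrow(4)$.}
\begin{itemize}
\item By Theorem~\ref{thm:inj_tt}, $\phi$ has a train track representative $f$ with no illegal turns, so $f$ and all its iterates are immersions.
\item \emph{Pullbacks stabilise.} If no pullback $\widehat\Gamma_i$ were empty, the Proposition (cf.\ Prop.~3.11 of \cite{Mut20}) gives an $f$-invariant loop. Such a loop yields $k,d\ge1$ and $g\neq e$ with $[\phi^k(g)]=[g^d]$, contradicting (3). The same argument applies to any other locally injective train track representative, since any such representative is an immersion.
\item \emph{$(\lambda,n)$-hyperbolicity.} Every immersed path is legal, so $f^n$ stretches it by $\lambda_f^n$ up to a bounded error at the endpoints. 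This gives $|f^n(\sigma)|\ge \lambda|\sigma|$ for paths longer than a constant $C$.
\item The short paths are handled by pullback stabilisation, which bounds the length of $f^{-n}$-preimages. If $\widehat\Gamma_N=\varnothing$, any two distinct lifts of a path through $f^N$ already agree, so "backward" stretching is controlled. Together with expansivity (Prop.~3.11 of \cite{Rey}) this gives uniform flaring for all paths.
\end{itemize}

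\emph{$(4)\Rightarrow(1)$.} Build the mapping telescope, which is a tree of spaces for $F_n*_\phi$. Its vertex spaces are copies of the universal cover $\widetilde\Gamma$, and its edge maps are lifts of the immersion $f$. Then apply the Bestvina--Feighn combination theorem \cite{BF92}. Its hypotheses are:
\begin{itemize}
\item \emph{Quasi-isometric embeddings of edge spaces.} These hold because lifts of immersions of finite graphs are isometric embeddings on trees.
\item \emph{Annuli flare.} A thin annulus corresponds to a sequence of paths related by $f$ forward or by a choice of $f$-preimage backward. Forward flaring is the train track stretch. Backward flaring uses stabilisation: once $\widehat\Gamma_N=\varnothing$, a path and its $f^N$-images determine each other in the pullback. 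A long annulus without forward growth would then produce a nonempty $\widehat\Gamma_j$ for large $j$, which is impossible.
\end{itemize}

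I expect the main obstacle to be this last step: converting the $(\lambda,n)$-hyperbolicity plus stabilisation into the precise annuli-flare inequality, in particular ruling out long annuli whose two boundary paths differ only in the backward direction. There I would follow Mutanguha's argument, bounding the width of such annuli by the stabilisation index $N$ and then using the forward stretch factor $\lambda$.
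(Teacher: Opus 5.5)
The paper gives no proof of this statement; it is quoted from \cite{Mut20}, so I am judging your argument on its own merits.

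\textbf{The first two implications are fine, but your use of pullback stabilisation is wrong.} $(1)\Rightarrow(2)$ and $(2)\Rightarrow(3)$ are complete and correct: the embedding of $BS(1,d)$ via $s=h^{-1}t^k$, checked on the kernel $\mathbb{Z}[1/d]$ and on the quotient $\mathbb{Z}$, works. The gap is in how you use stabilisation, and as a result $(4)\Rightarrow(1)$ is not actually proved.

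You assert that once $\widehat\Gamma_N=\varnothing$, ``any two distinct lifts of a path through $f^N$ already agree''. This is false for short paths. A non-surjective immersion of a finite core graph is never injective: an injective graph map sending edges to nondegenerate edge paths must permute the edges, hence is a homeomorphism. So $\Gamma\times_{f^N}\Gamma$ always has off-diagonal points.

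Stabilisation can only mean that the off-diagonal components of $\Gamma\times_{f^N}\Gamma$ are trees. Since a lift through an immersion is determined by one point, this says that loops, and paths longer than some constant, have at most one $f^N$-lift. That is the opposite of how you use it for short paths in $(3)\Rightarrow(4)$. In fact pullbacks play no role in the $(\lambda,n)$ part of $(3)\Rightarrow(4)$. Immersed loops are legal, and uniform forward stretching of loops follows as in Lemma~\ref{lem:31hyp}. That argument also covers representatives with an invariant forest, where your ``stretch by $\lambda_f^n$'' is not available.

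\textbf{What is missing in $(4)\Rightarrow(1)$ is the shape of thin annuli.} In the Bass--Serre tree of $F_n*_\phi$, each vertex has exactly one incident edge along which rings are pushed forward by $f$. From the vertex $F_n$ this is the edge to $t^{-1}F_n$, whose stabiliser $t^{-1}F_nt$ contains $F_n$ because $tF_nt^{-1}=\phi(F_n)$. There are infinitely many edges along which one passes to $f$-preimages. Hence a geodesic never passes from preimages back to images, and every essential annulus has word $D_1^{a}D_1^{-b}$ with $a+b=2m$.

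Suppose $a,b\ge N$. Then the rings $N$ steps on either side of the top ring, joined through the thin trace, give a nontrivial loop in $\widehat\Gamma_N(f)$. It lies off level $N-1$ because the path is geodesic; this is the mechanism of Proposition~\ref{prop:admis}. So stabilisation forces $\min(a,b)<N$.

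It follows that from $\Delta_0$, one end $\Delta_{\pm m}$ is reached by at least $m-N$ forward steps followed by at most $N$ backward steps. Each thin step costs at most $2\rho$, so
\[
  l_{\pm m}\ \ge\ \Lambda^{-N}\lambda^{\,m-N}\,l_0-C(\rho,m).
\]
Here $\lambda>1$ is a uniform stretch factor for immersed loops (after passing to a power of $f$), and $\Lambda$ is the maximal edge stretch. Fix $m$ with $\Lambda^{-N}\lambda^{m-N}\ge 3$. Taking girth at least $C(\rho,m)$ then gives flaring with factor $2$.

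Your last paragraph points in this direction, but the proposal does not carry it out. A minor point: lifts of $f$ to $\widetilde\Gamma$ are bi-Lipschitz embeddings onto convex subtrees, not isometric embeddings. This still gives the quasi-isometric embedding hypothesis.
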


  \begin{definition}[Hyperbolic endomorphism]
    A fully irreducible non-surjective endomorphism
    $\phi:F_n\to F_n$ is \textbf{hyperbolic} if it satisfies any (hence
    all) of the above conditions.
  \end{definition}

  \section{Geometry of mapping torus}

  A simplified version of Bestvina-Feighn's combination theorem is
  presented here; see \cite{BF92} for more details.

  Let $\Gamma$ be a marked metric graph and let
  $f_1,\ldots,f_r:\Gamma\to\Gamma$ be graph morphisms representing
  outer-endomorphism classes $\phi_1,\ldots,\phi_r$. Form the associated mapping
  torus
  \begin{equation*}
    M(\Gamma; f_1,\ldots,f_r)\coloneqq
    \frac{\Gamma\;\coprod_{j=1}^r\Gamma_{(j)}\times[0,1]}{(x_j,1)\sim(f_j(x_j),0)}
  \end{equation*}
  where $\Gamma_{(j)}$ denotes a copy of $\Gamma$, the lower indexing is to distinguish between the
  different copies and where $x_j \in \Gamma_{(j)} \cong  \Gamma$.
  Write $M=M(\Gamma; f_1, \ldots, f_r)$ for brevity. Then $\pi_1(M)=F_n *_{j=1}^r\phi_j$.
  The space $M$ has a natural graph of space structure where the underlying graph is
  $R_r$, a rose with $r$ petals. The vertex space is
  $\Gamma$, and for each oriented edge $e_j$ of $R_r$ the corresponding
  edge map is $f_j$, while the reverse edge $\overline{e_j}$ corresponds
  to the identity map on $\Gamma$. Let
  $\pi_M:M\to R_r$ denote the natural projection. We use the notation $I_{\mathbb Z}$ to denote $I \cap \mathbb{Z}$ for any interval $I\subseteq \mathbb R$.
  \begin{definition}
    An \textbf{essential annulus} of length $m$ is a map
    \begin{equation*}
      \Delta:S^1\times[-m,m]\to M
    \end{equation*}
    satisfying:
    \begin{enumerate}
      \item It is transverse to the vertex space.
      \item The $\Delta$-preimage of the vertex space is
            $S^1\times[-m,m]_{\mathbb Z}$.
      \item For $i\in[-m,m]_{\mathbb Z}$ the loop
            $\Delta|_{S^1\times\{i\}}$ is locally injective except
            possibly at the basepoint $0\in S^1$.
      \item For $i\in[-m,m-1]_{\mathbb Z}$ the path
            $\Delta|_{\{0\}\times[i,i+1]}$ is not homotopic rel
            endpoints into the edge space.
    \end{enumerate}
  \end{definition}

  When $t\in\mathbb Z$ the based loops $\Delta_t$ are the \textbf{rings}
  and the path $\Delta^*$ is the \textbf{trace} of the basepoint, that is, $\Delta(\{0\}\times[-m ,m])$. The
  \textbf{girth} of $\Delta$ is $l(\Delta_0)$ where $l$ denotes length
  after tightening rel basepoint in the corresponding vertex space.

  For $\lambda>1$, we say $\Delta$ is $\lambda$-\textbf{hyperbolic} if
  \begin{equation*}
    \lambda l(\Delta_0)\le\max\{l(\Delta_{-m}),l(\Delta_m)\}.
  \end{equation*}

  Let $T$ be the universal cover of $R_r$. Set
  $\alpha=\pi_M\circ\Delta|_{\{0\}\times[-m,m]}:[-m,m]\to R_r$ and let
  $\alpha^*$ be a lift to $T$.

  Let $v$ be the unique vertex of $R_r$. We label the directions at $v$
  by the symbols $D_i$ and $D_i^{-1}$, where $D_i$ corresponds to the
  oriented edge $e_i$ and $D_i^{-1}$ corresponds to the reverse edge
  $\overline{e_i}$. This labeling induces a labeling of the directions at
  each vertex of the universal cover $T$ by lifting via the covering map
  $T \to R_r$.  
  
  

  For each integer $i\in[-m,m-1]$, the restriction
  $\alpha^*|_{[i,i+1]}$ traverses a single oriented edge of $T$.
  We denote by $w_i\in\{D_1^{\pm1},\ldots,D_r^{\pm1}\}$ the label of this
  edge, determined by the direction of traversal. Thus any path in $T$ canonically determines a word in the alphabet
  $\{D_1^{\pm1},\ldots,D_r^{\pm1}\}$ recording the sequence of oriented
  edges it traverses.

  In this way, the path $\alpha^*$ determines a word
  \[
    w(\Delta)=w_{-m}\,w_{-m+1}\cdots w_{m-1}
  \]
  in the free monoid generated by $\{D_1^{\pm1},\ldots,D_r^{\pm1}\}$.
  For every $i\in[-m,m-1]_{\mathbb Z}$ define $\tau_i:(i,i+1)\to\Gamma$ by
  \begin{equation*}
    \tau_i(t)=
    \begin{cases}
      x &\text{if }\Delta^*(t)=(x,s)\text{ and }s<\tfrac12,\\
      f_k(x) &\text{if }\Delta^*(t)=(x,s)\text{ and }s\ge\tfrac12\text{ and }w_i=D_k^{\pm 1}.
    \end{cases}
  \end{equation*}
  The path $\tau_i$ is the projection of
  $\alpha^*|_{(i,i+1)}$ to a cross-section of the edge space. The annulus $\Delta$ is called \emph{$\rho$-thin} if
  $l(\tau_i)+1\le\rho$ for all $i\in[-m,m-1]$ and $x\in S^1$.

  \begin{example}
    Let $\alpha$ be a locally injective loop in $\Gamma$ and let
    $\Delta:S^1\times[-1,1]\to M$ have
    $\Delta|_{S^1\times\{-1\}}=\alpha$,
    $\Delta|_{S^1\times\{0\}}=f_1(\alpha)$,
    $\Delta|_{S^1\times\{1\}}=f_2f_1(\alpha)$. Then the corresponding
    word is $D_1D_2$ (note the reverse order).
  \end{example}

  \begin{definition}[Admissible word]
    A word $w\in F\langle D_1,\ldots,D_r\rangle$ is an
    \textbf{admissible word of length $k$} if it is reduced, has length $k$
    with respect to the generating set $\{D_i,D_i^{-1}\}$, where $k$ is even,
    and of the form $w=D_{n_1}^{\pm s}D_{n_2}\cdots D_{n_{k-s-1}}$ for some
    $n_j\in\{1,\ldots,r\}$, where $s>0$.

  \end{definition}

  \begin{definition}
    Let $w$ be an admissible word of length $k$.
    \begin{enumerate}
      \item $w$ is \emph{positive} if it involves only letters $D_i$ and no
            letters $D_i^{-1}$.
      \item $w$ is \emph{unidirectional} if $w=D_j^k$ or $w=D_j^{-k}$ for some $j\in\{1, \cdots, r\}$,
            and is \emph{mixed} otherwise.
    \end{enumerate}
  \end{definition}


  Let $w$ be a positive admissible word of length $2k$, and write
  \[
    w = D_{i_{2k}} D_{i_{2k-1}} \cdots D_{i_2} D_{i_1},
  \]
  where each $i_j\in\{1,\ldots,r\}$.  
  Let $\alpha\subset\Gamma$ be a locally injective loop.

  We construct an annulus
  \[
    \Delta(\alpha,w):S^1\times[-k,k]\to M
  \]
  as follows. Set
  \[
    \Delta_{-k}=\alpha,
  \]
  and define inductively, for $j=1,\ldots,2k$,
  \[
    \Delta_{-k+j} = f_{i_j}(\Delta_{-k+j-1}),
  \]
  where each $\Delta_t$ is viewed as a loop in the corresponding vertex
  space of $M$.

  The trace of the basepoint $\Delta(\{0\}\times[-k,k])$ projects under
  $\pi_M:M\to R_r$ to a path $\alpha:[-k,k]\to R_r$ whose lift
  $\alpha^*:[-k,k]\to T$ traverses, on each interval $[j-1,j]$, the
  oriented edge of $T$ labelled by $D_{i_j}$.

  This construction produces a uniformly thin annulus, in particular,
  $\Delta(\alpha,w)$ is $1$-thin. We now do the similar annuli construction for
  bi-directional word in the following way. Suppose, without loss of generality that
  \[
    w = D_1^{-s} D_{i_1} \cdots D_{i_k}
  \]
  is an admissible word with $s\ge 1$ and of total word length $2m$. Assume that $\alpha\subset\Gamma$
  is a locally injective loop for which there exists a loop
  $\beta\subset\Gamma$ satisfying
  \[
    f_1^{\,s}(\beta) = \alpha
  \]
  (up to free homotopy).
  
  We define an annulus $\Delta=\Delta(\alpha,w):S^1\times[-m,m]\to M$ by
  setting
  \[
    \Delta_{-m} = \alpha = f_1^{\,s}(\beta),
  \]
  and inductively defining the remaining rings $\Delta_{-m+1}, \cdots \Delta_{-m+s}$
  \[
    f_1^{\,s-1}(\beta), \cdots, f_1(\beta), \beta
  \]
  while the remaining letters $D_2\cdots D_k$ are constructed by successive
  applications of the corresponding maps $f_{i_1},\ldots,f_{i_k}$. In this way,
  the trace of the basepoint projects to a path whose associated word is
  $w$, and $\Delta(\alpha, w)$ is an annulus with associated word $w$.


  \begin{proposition}\label{prop:admis}
      Let $\phi_1, \cdots, \phi_r: F_n \to F_n$ be a family of essentially disjoint endomorphisms. Then, there exists a $N\ge 1$ such that after replacing each $\phi_i$
      by $\phi^N_i$ and constructing the mapping torus $M$ as above, the word associated to
      any $\rho$-thin annulus $\Delta: S^1\times [-m, m] \to M$ is admissible.
  \end{proposition}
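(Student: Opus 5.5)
The plan is to read off the shape of $w(\Delta)$ from the local pattern of consecutive letters. Replace each $\phi_i$ by $\phi_i^N$ so that two things hold at once. First, the essential disjointness condition holds with exponent $1$, i.e.\ $\phi_i(F_n)\cap g\phi_j(F_n)g^{-1}=\{e\}$ for $i\neq j$. Second, by Corollary~\ref{cor:gammahat} applied to each $\phi_i$, the injective train track representatives $f_i$ satisfy $\widehat{\Gamma}_1(f_i)=\varnothing$. This second property says that $f_i(x)=f_i(y)$ forces $x=y$, and in particular each $f_i$ is $\pi_1$-injective.

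The translation between letters and rings is as follows. A letter $w_t=D_i$ means $\Delta_{t+1}\simeq f_i(\Delta_t)$. A letter $w_t=D_i^{-1}$ means $\Delta_t\simeq f_i(\Delta_{t+1})$. Here $\simeq$ is free homotopy, and all rings are nontrivial loops, being locally injective.

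\textbf{Step 1 (reducedness).} I would rule out the two backtracks $D_iD_i^{-1}$ and $D_i^{-1}D_i$ in $w(\Delta)$; these correspond to $\alpha^*$ turning back along the same edge of $T$.
\begin{itemize}
\item For $D_i^{-1}D_i$, both basepoint traces $\tau_t$ and $\tau_{t+1}$ run through the same edge space $\Gamma_{(i)}\times[0,1]$ and end on the same ring $\Delta_{t+1}$. Concatenating them gives a path from $\Delta_t$ to $\Delta_{t+2}$ that enters and leaves through the same side.
\item For $D_iD_i^{-1}$, the two endpoints on $\Delta_{t+1}$ are $f_i$-images of basepoints of $\Delta_t$ and $\Delta_{t+2}$.
\end{itemize}
In both cases I would use $\widehat{\Gamma}_1(f_i)=\varnothing$, i.e.\ pairs of points with equal $f_i$-image lie on the diagonal, together with $\pi_1$-injectivity of $f_i$. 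The aim is to show that the loop formed by the two traces and the connecting arc in the vertex space is null-homotopic. Then each single trace is homotopic rel endpoints into the edge space, contradicting condition~(4) in the definition of an essential annulus. Hence $w(\Delta)$ is reduced. I expect this step to be the main obstacle. Making the homotopy argument honest requires tracking the basepoint through $\tau_t$ and using the pullback statement, not merely the group-level injectivity.

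\textbf{Step 2 (no peaks $D_iD_j^{-1}$).} Suppose $w_t=D_i$ and $w_{t+1}=D_j^{-1}$. Then $f_i(\Delta_t)\simeq\Delta_{t+1}\simeq f_j(\Delta_{t+2})$. Pass to $\pi_1$: a nontrivial element of $\phi_i(F_n)$ is conjugate to a nontrivial element of $\phi_j(F_n)$, so $\phi_i(F_n)\cap g\phi_j(F_n)g^{-1}\neq\{e\}$ for some $g$. When $i\neq j$ this contradicts essential disjointness. When $i=j$ the pattern is $D_iD_i^{-1}$, which is already excluded by Step~1.

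\textbf{Step 3 (assembling).} By Step~2, once a positive letter occurs, every later letter is positive. So $w(\Delta)$ is a (possibly empty) block of inverse letters followed by a block of positive letters, and it is reduced by Step~1. The length is $2m$ by construction, hence even. This is the admissible shape $D_{n_1}^{\pm s}D_{n_2}\cdots$.

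One caveat: if the definition is read as requiring the inverse block to be a power of a single $D_{n_1}^{-1}$, Steps~1--3 do not give that. A string such as $D_a^{-1}D_b^{-1}$ with $a\neq b$ only says $\Delta_t\in f_af_b(F_n)$, which is not excluded by disjointness. I would therefore first check that the intended meaning is ``negative block, then positive block'', as the later constructions of $\Delta(\alpha,w)$ suggest, before attempting anything further. The thinness constant $\rho$ plays no role in this argument.
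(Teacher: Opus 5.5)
Your route is the paper's. Step 2 is exactly its essential-disjointness argument for $D_iD_j^{-1}$, Step 1 rests on the same input (Corollary~\ref{cor:gammahat}), and Step 3 is its ``combining the two steps''. Your caveat about the meaning of ``admissible'' is accurate and applies equally to the paper. Its proof also only produces a reduced word made of a block of inverse letters followed by a block of positive letters, and that is the reading Proposition~\ref{prop:hypann} uses, up to reversing orientation.

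\textbf{Step 1 misreads $\widehat\Gamma_1(f_i)=\varnothing$.} It cannot mean that $f_i(x)=f_i(y)$ forces $x=y$. A pointwise injective graph map of the finite core graph $\Gamma$ into itself is an embedding onto a connected subgraph of the same rank. That subgraph must be all of $\Gamma$, so the map is a homeomorphism and $\phi_i$ would be surjective. Distinct points with the same $f_i$-image therefore always exist. The pullbacks must be read through their cores, as in \cite{Mut20}. The statement then says only that the off-diagonal part of $\Gamma\times_{f_i}\Gamma$ carries no loops: if $\beta,\gamma$ are immersed loops with $f_i\circ\beta=f_i\circ\gamma$, then $\beta=\gamma$. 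Group-theoretically, $\phi_i(F_n)\cap g\phi_i(F_n)g^{-1}=\{e\}$ for every $g\notin\phi_i(F_n)$.

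That loop version is exactly what you need, and no pointwise basepoint tracking is required. After making the annulus $1$-thin at a subword $D_iD_i^{-1}$, the pair $(\Delta_t,\Delta_{t+2})$ is a loop in the pullback. Hence $\Delta_t=\Delta_{t+2}$, and the two traces run into and back out of the same lift of the edge cylinder. This is the paper's ``nontrivial loop in $\widehat\Gamma_1(f_1)$'' argument read contrapositively.

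What this contradicts is essentiality in its no-backtracking form: $\Delta|_{\{0\}\times[t,t+2]}$ is not homotopic rel endpoints into an edge space. A single trace already lies in an edge cylinder, so ``each single trace is homotopic into the edge space'' is not the right target.

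The case $D_i^{-1}D_i$ needs no pullback at all. The type-$i$ edge cylinder is glued to the vertex space along its $0$-end by the identity, so in the universal cover each vertex space meets exactly one lift of it along that end. That pattern is therefore always a backtrack. Only $D_iD_i^{-1}$, where many lifts attach along their $f_i$-ends, requires the pullback input, and it is the only case the paper writes out.
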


  \begin{proof}
    Since the family of endomorphisms $\phi_1, \cdots \phi_r: F_n\to F_n$ is essentially disjoint, we take a $N\ge 1$ large enough so that
    \begin{equation*}
      \phi_i^N(F_n)\cap g\phi_j^N(F_n)g^{-1}=\{e\}
      \quad\forall g\in F_n,\ i\neq j.
    \end{equation*}
    and the conclusion of Corollary~\ref{cor:gammahat} is satisfied for all $i$.
    After replacing each $\phi_i$ by $\phi_i^N$, we may assume $N=1$. Now we show that no subword of the form $D_iD_j^{-1}$ with $i\neq j$
    can occur in $w(\Delta)$. Suppose such a subword appears. Restricting
    $\Delta$ to the corresponding subannulus, we obtain an annulus
    $\Delta:S^1\times[-1,1]\to M$ with rings
    $\Delta_{-1},\Delta_0,\Delta_1$ and, without loss of generality, let the
    associated word be $D_1D_2^{-1}$.

    By construction of the mapping torus, the loops
    $f_1(\Delta_{-1})$ and $f_2(\Delta_1)$ are homotopic to $\Delta_0$
    in the vertex space. Hence they represent the same conjugacy class in
    $F_n$. Let $x,y\in F_n$ represent the conjugacy classes of
    $\Delta_{-1}$ and $\Delta_1$, respectively. Then
    $\phi_1(x)$ and $\phi_2(y)$ are conjugate to the same nontrivial element
    of $F_n$, contradicting the essential disjointness of the endomorphisms.
    Therefore no subword $D_iD_j^{-1}$ with $i\neq j$ can occur in
    $w(\Delta)$.
    
    Next we show that $w(\Delta)$ is reduced, i.e.\ it contains no subwords
    of the form $D_iD_i^{-1}$ or $D_i^{-1}D_i$. Suppose, for contradiction,
    that $w(\Delta)$ contains the subword $D_1D_1^{-1}$. Restricting to the
    corresponding subannulus, we may assume
    that $\Delta:S^1\times[-1,1]\to M_{f_1}$ is $\rho$-thin and without loss
    of generality, the associated word is $D_1D_1^{-1}$.

    By homotoping $\Delta$ rel.\ base point within the edge space, we may
    further assume that $\Delta$ is $1$-thin with associated word $D_1D_1^{-1}$. Thus the pair
    $(\Delta_{-1},\Delta_1)$ determines a nontrivial loop in the first
    pullback $\widehat{\Gamma}_1(f_1)$. This contradicts the assumption that
    $\widehat{\Gamma}_1(f_1)=\varnothing$. Therefore no such subword can
    occur, and $w(\Delta)$ is reduced.

    Combining the two steps, we conclude that $w(\Delta)$ is reduced and
    contains no subwords of the form $D_iD_j^{-1}$ with $i\neq j$.
    Therefore $w(\Delta)$ is admissible.

  \end{proof}


  \begin{lemma}\label{lem:kK}
    Let $\Gamma,\Gamma'\in CV_n$ and let $h:\Gamma\to\Gamma'$ be a
    difference of markings with homotopy inverse $h'$. Let
    $\alpha\subset\Gamma$ be a locally injective loop, or a based loop
    locally injective except possibly at the basepoint. Then there exists
    $K>0$ such that
    \begin{equation*}
      K^{-1}\,l(h(\alpha)) \le l(\alpha) \le K\,l(h(\alpha)),
    \end{equation*}
    where $l(\beta)$ denotes the length of $\beta$.
  \end{lemma}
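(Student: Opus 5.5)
The constant $K$ should come from a Lipschitz constant of the change-of-marking map and of its homotopy inverse, together with bounded cancellation. I would first put both maps into a standard form. Homotope $h$ rel vertices so that it is linear on each edge of $\Gamma$, and likewise $h'$ on each edge of $\Gamma'$. Let $L_1$ be the Lipschitz constant of $h$, namely the maximum over edges $e$ of $l([h(e)])/l(e)$, and let $L_2$ be the analogous constant for $h'$. Since $\Gamma$ and $\Gamma'$ are finite graphs with positive edge lengths, both constants are finite, and they do not depend on $\alpha$.

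The upper bound $l(h(\alpha)) \le L_1\, l(\alpha)$ is immediate. Here $h(\alpha)$ means the tightened image: the free homotopy class for a loop, or the image tightened rel basepoint for a based loop. The image path $h\circ\alpha$ has length at most $L_1\, l(\alpha)$, and tightening only decreases length.

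For the reverse inequality I would apply the same reasoning to $h'$. The loop $h'(h(\alpha))$ is homotopic to $h'\circ h\circ\alpha$, and $h'\circ h\simeq \mathrm{id}_\Gamma$, so it is homotopic to $\alpha$. In the locally injective case $\alpha$ is already tight in its free homotopy class. So $l(\alpha)=l([h'h\alpha]) \le L_2\, l(h(\alpha))$.

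The main obstacle is the based-loop case, since the homotopy $h'h\simeq\mathrm{id}$ moves the basepoint along a path $\gamma_x$. Tightening rel basepoint then gives $l(\alpha)\le L_2\, l(h(\alpha)) + 2\,l(\gamma_x)$. I would handle this in three steps.
\begin{enumerate}
\item Bound the tracks: the lengths $l(\gamma_x)$ are uniformly bounded by some $C$, because the homotopy can be chosen with bounded tracks on a compact graph; this is the bounded cancellation constant.
\item Use that nontrivial loops have length bounded below. Any nontrivial loop in $\Gamma'$ has length at least $\delta>0$, the minimal edge length.
\item Absorb the additive error. Since $l(h(\alpha))\ge\delta$, we get $2C\le (2C/\delta)\, l(h(\alpha))$.
\end{enumerate}

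The trivial loop can be excluded, since a locally injective nontrivial loop maps to a nontrivial class. Taking $K=\max\{L_1,\, L_2+2C/\delta\}$ then gives both inequalities.
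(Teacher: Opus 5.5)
Your proof is correct. For free loops it is exactly the paper's argument. The upper bound $l(h(\alpha))\le\sigma(h)\,l(\alpha)$ comes from the edge-wise Lipschitz constant. The reverse bound is $l(\alpha)\le l(h'h(\alpha))\le\sigma(h')\,l(h(\alpha))$, because a locally injective loop is the shortest representative of its free homotopy class.

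Where you go beyond the paper is the based case, which the paper dismisses with ``the same argument applies to based loops.'' It does not apply verbatim, for exactly the reason you identify: the homotopy $h'h\simeq\mathrm{id}$ moves the basepoint, so the step $l(\alpha)\le l(h'h(\alpha))$ can fail rel basepoint. For example, on the rose with petals $a,b$ of equal length, let $h$ fix the vertex with $h(a)=bab^{-1}$ and $h(b)=b$. This map is freely homotopic to the identity, so $h'=\mathrm{id}$ is a homotopy inverse. For the based loop $\alpha=b^{-1}ab$ one gets $h'h(\alpha)=a$, a third as long as $\alpha$. Since $\sigma(h')=1$, even $l(\alpha)\le\sigma(h')\,l(h(\alpha))$ fails. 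Your additive correction $2C$ from the tracks, absorbed using $l(h(\alpha))\ge\delta$ for nontrivial $\alpha$, is what actually makes the based case work. The price is the constant $L_2+2C/\delta$ in place of $\sigma(h')$. This is harmless, since later only the existence of a uniform $K$ is used.

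Two small repairs are needed.
\begin{enumerate}
\item Calling $C$ ``the bounded cancellation constant'' is a misnomer; its finiteness is just compactness. Let $\widetilde{h'h}$ be the lift determined by the homotopy. The tightened track at $x$ has length $d(\tilde x,\widetilde{h'h}(\tilde x))$ in the universal cover. This is a continuous, deck-invariant function, so it is bounded on the compact graph $\Gamma$.
\item Homotoping $h$ rel vertices to make it linear on edges moves $h(x)$ when the basepoint lies inside an edge. This changes the based loop $h(\alpha)$ by conjugation by a bounded track. Either take $h$ and $h'$ linear on edges from the start (the paper tacitly does so when it defines $\sigma(h)$), or absorb this change the same way, using the minimal edge length of $\Gamma$ for the upper bound.
\end{enumerate}
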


  \begin{proof}
    Let $\sigma(h)\coloneqq\max_{e\in E(\Gamma)}\frac{|h(e)|}{|e|}$ and
    likewise $\sigma(h')$. Since loops are concatenations of edges,
    $l(h(\alpha))\le\sigma(h)l(\alpha)$. Because $\alpha$ is locally
    injective and $h'h(\alpha)$ is homotopic to $\alpha$ we have
    $l(h'h(\alpha))\ge l(\alpha)$. Hence
    \begin{align*}
      l(\alpha) &\le l(h'h(\alpha)) \le \sigma(h')l(h(\alpha)).
    \end{align*}
    Setting $K=\max\{\sigma(h),\sigma(h')\}$ and combining the inequalities
    yields the claim. The same argument applies to based loops.
  \end{proof}

  \section{Annuli Flaring and Hyperbolicity}

  An automorphism $\phi:G\to G$ is \textbf{hyperbolic} if there exist
  $\lambda>1$ and $n\ge 1$ such that for all nontrivial $g\in G$,
  \begin{equation*}
    \lambda\,l(g)\le \max\{l(\phi^n(g)),l(\phi^{-n}(g))\}.
  \end{equation*}
  where $l(\cdot)$ is word length in a fixed finite generating set. For
  endomorphisms inverses need not exist.

  A graph map $f:\Gamma\to\Gamma$ is assumed to send vertices to vertices and is
  locally injective on interiors of edges. It maps edges to nontrivial
  edge-paths but may not be locally injective at vertices. Below we define
  $(\lambda, n)$-hyperbolic graph maps which extend the definition given in
  \cite{Mut20}.



  \begin{definition}[$(\lambda,n)$-hyperbolic]
    For $\lambda>1$, $n\ge 1$ and graph maps
    $f_1,\ldots,f_r:\Gamma\to\Gamma$, the collection of graph maps $\{f_1, \cdots, f_r\}$ is
    $(\lambda,n)$-hyperbolic if for any admissible word $w$ of length
    $2n$ and any locally injective loop (or based loop)
    $\alpha\subset\Gamma$, the associated annulus
    $\Delta=\Delta(\alpha,w):S^1\times[-n,n]\to M$ satisfies one of:
    \begin{enumerate}
      \item $\lambda l_0\le l_n$,
      \item $\lambda l_0\le l_{-n}$,
    \end{enumerate}
    where $l_j=l(\Delta|_{S^1\times\{j\}})$.
  \end{definition}

  Now fix notation. For an irreducible non-surjective endomorphism
  $\phi_1$ there is a train track representative $f_1:\Gamma\to\Gamma$
  with no illegal turns by Theorem \ref{thm:inj_tt}. For
  $\phi_2,\ldots,\phi_r$ choose train track representatives
  $f_j':\Gamma_j\to\Gamma_j$. Find graph maps $h_j:\Gamma\to\Gamma_j$ and
  $f_j:\Gamma\to\Gamma$, obtained by suitable change of markings, with $h_j\circ f_j=f_j'\circ h_j$.
  \[
    \begin{tikzcd}
      \Gamma \arrow{r}{f_j} \arrow{d}[swap]{h_j}
      & \Gamma \arrow{d}{h_j} \\
      \Gamma_j \arrow{r}{f_j'}
      & \Gamma_j
    \end{tikzcd}
  \]

    The following lemma is adapted from Lemma~6.3 of \cite{Mut20}.
    \begin{lemma}[Uniform expansion with coefficient $3$]\label{lem:31hyp}
      Let $f:\Gamma \to \Gamma$ be a train track representative of a non-surjective hyperbolic endomorphism of $F_n$. Then there exists $N \ge 1$ such that for every immersed loop $\alpha \subset \Gamma$,
      \[
      \ell\!\left(f^{N}(\alpha)\right) > 3\,\ell(\alpha).
      \]
      \end{lemma}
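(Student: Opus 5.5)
Since the representative may be taken (by Theorem~\ref{thm:inj_tt}) to have no illegal turns, I will first reduce to the situation where $f$ is an immersion, so that every iterate $f^k$ is locally injective on all of $\Gamma$. Then for an immersed loop $\alpha$, the image $f^k(\alpha)$ is already immersed and needs no tightening. In particular $\ell(f^k(\alpha))$ is computed exactly by the transition matrix: if $v_\alpha$ is the edge-count vector of $\alpha$, then $f^k(\alpha)$ has edge-count vector $A(f)^k v_\alpha$. (If the given $f$ is merely a train track, I would replace it by the Reynolds representative, using Lemma~\ref{lem:kK} to transfer length comparisons between the two graphs at the cost of a constant $K^2$. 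That constant is absorbed by raising the power of $f$.)

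The main step uses the Perron--Frobenius structure. Since $\phi$ is fully irreducible, hence irreducible, and non-surjective, $A=A(f)$ is an irreducible non-negative integer matrix. It has PF eigenvalue $\lambda>1$, because an irreducible train track that is not a homeomorphism is expanding.

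To get uniform expansion independent of $\alpha$, I would pass to the metric in which edge lengths are given by the positive left PF eigenvector $u$ (so $uA=\lambda u$). In that metric,
\[ \ell_u(f^k(\alpha)) = u A^k v_\alpha = \lambda^k u v_\alpha = \lambda^k \ell_u(\alpha) \]
exactly. The length $\ell$ in the original metric is bi-Lipschitz equivalent to $\ell_u$, with a constant $C$ depending only on the ratio $\max u/\min u$ and the original edge lengths. Therefore
\[ \ell(f^k(\alpha)) \ge C^{-2}\lambda^k \ell(\alpha), \]
and choosing $N$ with $C^{-2}\lambda^N>3$ finishes the proof.

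The step I expect to be the main obstacle is the reduction to an immersion. The statement only assumes a train track representative, and if $f$ has illegal turns then a loop crossing them may cancel under $f^k$, so the exact formula fails. Here I would argue in one of two ways. The first is via Theorem~\ref{thm:inj_tt} together with the remark that for irreducible non-surjective maps, train tracks may be taken without illegal turns, as in Lemma~6.3 of \cite{Mut20}. The second is via the standard bounded-cancellation plus legal-segment argument: write $\alpha$ as a concatenation of maximal legal segments, note that long legal segments grow by the factor $\lambda^k$, and use the hyperbolicity hypothesis (no periodic conjugacy classes, i.e.\ condition (3) of the characterisation theorem of \cite{Mut20}) to bound the number of illegal turns relative to length after iteration. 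I would present the first route, noting that the representative of Theorem~\ref{thm:inj_tt} is the one fixed in the notation preceding the lemma.
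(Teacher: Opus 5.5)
Your proof is correct, but it takes a different route from the paper.

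\textbf{The paper's route.} The paper argues edge by edge. Hyperbolicity (no periodic conjugacy classes) rules out an edge with $\ell(f^n(e))=1$ for all $n$: otherwise $\bigcup_n f^n(e)$ would be an invariant subgraph carrying a loop of constant length under iteration. So every edge eventually has image of length at least $3$. Because $f$ has no illegal turns, these edge-image lengths simply add along $\alpha$. Invariant forests are handled separately, by collapsing them and checking that the collapsed map is still an immersion.

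\textbf{Your route.} You measure length with the left Perron--Frobenius eigenvector, in which an immersion stretches every immersed loop by exactly $\lambda$. You then pay a bi-Lipschitz constant, plus $K^2$ from Lemma~\ref{lem:kK} if you first switch representatives. This buys several things:
\begin{enumerate}
\item it is shorter and gives the strict inequality with an explicit exponential rate $C^{-2}\lambda^N$;
\item it avoids the forest case entirely;
\item it shows hyperbolicity is not really used here, since irreducibility and non-surjectivity, which force $\lambda>1$, already suffice;
\item the change-of-marking step makes the conclusion independent of which train track representative $f$ is given.
\end{enumerate}

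\textbf{One point to state explicitly.} You need the immersion you work with to have an irreducible transition matrix, so that the eigenvector is strictly positive. Do not deduce this merely from irreducibility of $\phi$: an arbitrary train track representative can have an invariant forest, and then the eigenvector vanishes on the forest edges. Taking the irreducible train-track immersion provided by Reynolds \cite{Rey}, as the paper itself implicitly does in Lemma~\ref{lem:weak_conv}, settles this.

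\textbf{What the paper's route buys.} It needs no Perron--Frobenius theory and tolerates a reducible immersion. The price is that $f$ itself must have no illegal turns, and hyperbolicity must be invoked.

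Your suggested second route via bounded cancellation is unnecessary.
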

  \begin{proof}
    Since $f$ represents a hyperbolic endomorphism, it has no periodic conjugacy classes and hence no invariant immersed loop.

    First suppose that $\Gamma$ contains no nontrivial $f$-invariant forest. If there exists an edge $e$ such that $l(f^n(e)) = 1$ for all $n \ge 1$, then the union
    \[
    \bigcup_{n \ge 1} f^n(e)
    \]
    is a finite $f$-invariant subgraph of $\Gamma$. Since this subgraph is not a forest, it contains an immersed loop whose iterates under $f$ have constant length and since there are only finitely many edges, it is eventually periodic
    under $f$, contradicting hyperbolicity. Hence, for every edge $e$ there exists $n_e \ge 1$ such that $\ell(f^{n_e}(e)) \ge 3$. Let
    \[
    N = \max\{n_e : e \text{ is an edge of } \Gamma\}.
    \]
    Then $\ell(f^{N}(e)) \ge 3$ for every edge $e$. Since $f$ is a train track map with no illegal turns, we get that
    \[
    l(f^N(\alpha)) \ge 3\,l(\alpha).
    \]

    Now suppose that $\Gamma$ contains a nontrivial $f$-invariant forest $F$. Collapse each component of $F$ to obtain a graph $\Gamma'$ and an induced map $f' : \Gamma' \to \Gamma'$. Since $f$ is immersion, $f$ restricted to the component of the maximal forest is a homeomorphism. Suppose $v'\in\Gamma'$ is the image of
    a vertex of $\Gamma$ whose $f$-image is in a collapsed tree. The directions at $v'$, $T_{v'}\Gamma'$, correspond to directions at the
    boundary of the collapsed tree $T_{\partial Y}\Gamma'$ if $v'$ itself is the image of the collapsed tree $Y$, otherwise they
    correspond to the directions of some vertex of $\Gamma$ outside collapsed forest. Since $f$ is injective on the
    boundary of the collapsed tree, the union of $df$ is injective and hence $df'_{v'}$ is also injective. Clearly $df'_{v'}$ is
    injective if $v'$ is the image of a vertex of $\Gamma$ whose $f$-image lies outside the collapsed forest. We hence get that $f'$ is
    also injective. By the previous case, there exists $N' \ge 1$ such that for every immersed loop $\alpha' \subset \Gamma'$,
    \[
    l(f'^{\,N'}(\alpha')) \ge 3\,l(\alpha').
    \]
    
    Choose $k \ge 1$ so that $3^k$ exceeds the number of edges in the maximal invariant forest $F$, and set $N = kN'$. Then any immersed loop $\alpha \subset \Gamma$ contains an edge outside $F$, whose image under $f^N$ has length exceeding $3l(\alpha)$. Hence
    \[
    l(f^N(\alpha)) \ge 3\,l(\alpha),
    \]
    as required.
  \end{proof}

  \begin{proposition}\label{prop:hypann}
    With the above notation, if the endomorphisms are hyperbolic then,
    after possibly taking suitable powers, the mapping torus
    $M=M(\Gamma;f_1,\ldots,f_r)$ is $(3,1)$-hyperbolic.
  \end{proposition}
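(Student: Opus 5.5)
We replace each $\phi_j$ by a suitable power, so that Lemma~\ref{lem:31hyp} applies to every $f_j$ with exponent $1$, and then check the $(3,1)$-condition on every admissible word of length $2$.

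\textbf{Choosing the power.} Apply Lemma~\ref{lem:31hyp} to the train track $f_1:\Gamma\to\Gamma$ and to each $f_j':\Gamma_j\to\Gamma_j$. This gives exponents $N_j$ with $l(f_j'^{N_j}(\beta))>3\,l(\beta)$ for immersed loops $\beta$. The same argument yields the stronger bound $l(f_j'^{N}(\beta))>C\,l(\beta)$ for any prescribed constant $C$: take $N$ a large multiple of $N_j$ and iterate, since a train track with no illegal turns sends immersed loops to immersed loops. For $j\ge 2$ we must transport this to $\Gamma$ via $h_j$. Let $K_j$ be the constant of Lemma~\ref{lem:kK} for $h_j$. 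Using $h_j f_j^N\simeq f_j'^N h_j$, we get for an immersed loop $\alpha\subset\Gamma$
\[
l(f_j^N(\alpha))\ \ge\ K_j^{-1} l(f_j'^N(h_j\alpha))\ \ge\ K_j^{-1} C\, l(h_j\alpha)\ \ge\ K_j^{-2}C\, l(\alpha).
\]
Choosing $C=3K_j^2$ gives factor $3$. Take $N$ to be a common multiple that works for all $j$ simultaneously and also satisfies Proposition~\ref{prop:admis}, so that the maps stay injective train tracks in their own coordinates. Then replace $\phi_j$ by $\phi_j^N$.

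\textbf{Checking admissible words of length $2$.} With $n=1$, an admissible word $w$ of length $2$ is, by definition, either $D_iD_j$ (positive) or $D_i^{-1}D_j$ (or $D_i^{-2}$). Following the construction of $\Delta(\alpha,w)$, the ring $\Delta_1$ is obtained from $\Delta_0$ by applying the forward map $f_{j}$. Tightening, $\Delta_0$ is a locally injective loop, so the previous paragraph gives $l_1=l(f_j(\Delta_0))\ge 3l_0$, which is condition (1). In the remaining case $D_i^{-2}$ we have $\Delta_{-1}=f_i(\Delta_0)$, because the ring at $-1$ is the image of the ring at $0$. Hence $l_{-1}\ge 3l_0$, which is condition (2). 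Based loops are handled the same way: Lemma~\ref{lem:kK} covers based loops, and the tightened length rel basepoint differs from the cyclic length by a bounded amount. That bounded discrepancy can be absorbed by enlarging $C$, except for loops of very short length. For those we use the finiteness of short loops together with expansivity (Reynolds' dichotomy, since the maps are non-surjective irreducible) to push the lengths past the threshold.

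\textbf{Main obstacle.} The hardest step is the transfer through the change of marking. Lemma~\ref{lem:kK} loses a multiplicative factor, so the factor $3$ must be bootstrapped to $3K_j^2$ by taking higher powers. The based-loop version also needs care, since tightening rel basepoint can fail to be cyclically reduced. I would first try to prove a uniform additive correction $l_{\mathrm{based}}\le l_{\mathrm{cyc}}+2D$, where $D$ is a bounded constant coming from the bounded cancellation lemma for $f_j$. I would then absorb this correction by taking one more power of each map.
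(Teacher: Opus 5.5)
Your treatment of immersed loops is the paper's own argument. You bootstrap the factor $3$ of Lemma~\ref{lem:31hyp} to $3K_j^2$ by passing to powers of the train tracks $f_j'$. You pull the inequality back to $\Gamma$ through $h_j$ using Lemma~\ref{lem:kK}. You then check the three shapes $D_iD_j$, $D_i^{-1}D_j$, $D_i^{-2}$ of an admissible word of length $2$. The differences are cosmetic: for $D_i^{-1}D_j$ you use $\Delta_1=f_j(\Delta_0)$ where the paper uses $\Delta_{-1}=f_i(\Delta_0)$ (both hold), and you treat $D_i^{-2}$ directly rather than by reversing the annulus.

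The based-loop paragraph, however, would not work as written.

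\emph{The bounded-discrepancy claim is false.} You assert that the length rel basepoint differs from the cyclic length by a bounded amount. This fails for exactly the based loops that need attention, those backtracking at the basepoint. Take $\alpha=\gamma\alpha'\bar\gamma$ with $\alpha'$ an immersed loop. It is locally injective away from the basepoint, has length $2l(\gamma)+l(\alpha')$ rel basepoint and cyclic length $l(\alpha')$, and $l(\gamma)$ is arbitrary.

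\emph{The proposed correction points the wrong way.} An upper bound $l_{\mathrm{based}}\le l_{\mathrm{cyc}}+2D$ does not help. On the output ring, $l_{\mathrm{based}}\ge l_{\mathrm{cyc}}$ already gives the needed lower bound for free, so cyclically reduced based loops follow from the loop case with no correction at all. On the input ring you would need $l_{\mathrm{based}}(\alpha)$ bounded by $l_{\mathrm{cyc}}(\alpha)$ plus a constant, which the example shows fails.

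\emph{What is actually needed.} For $\gamma\alpha'\bar\gamma$ you must show the map stretches the \emph{path} $\gamma$. Enlarging $C$ does not supply that, and neither does expansivity of $\phi_j$, which only gives an absolute lower bound on lengths.

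\emph{The fix.} Prove expansion for based loops directly; this is what the paper tacitly does when it applies Lemma~\ref{lem:31hyp} and Lemma~\ref{lem:kK} to based loops. Since $f_j'$ has no illegal turns, it sends a based loop that is locally injective away from the basepoint to a path that is locally injective at every interior point. So nothing cancels when tightening rel basepoint, and the length of the image is $\sum_i l((f_j')^N(e_i))$. The edge-counting argument of Lemma~\ref{lem:31hyp} bounds this below by $C\,l(\alpha)$. The based version of Lemma~\ref{lem:kK} then transfers the inequality to $\Gamma$ exactly as in the loop case.
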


  \begin{proof}
    Each $f_j'$ is a train-track representative and $\phi_j$ satisfies
    that there is a $N_j$ such that $l(f'^{N_j}_j(\alpha)) \ge 3K_j^2l(\alpha)$ for any immersed
    loop or based loop $\alpha \subset \Gamma_j$ as in Lemma \ref{lem:31hyp}. By replacing each $f_j'$ by
    suitable power, we can assume that $N_j=1$. Let $K_j$ be as in Lemma
    \ref{lem:kK}. Then
    \begin{gather*}
      l(h_jf_j(\alpha)) = l(f_j'h_j(\alpha))
      \ge 3K_j^2l(h_j(\alpha)) \ge 3K_jl(\alpha),\\
      \implies K_jl(f_j(\alpha)) \ge 3K_jl(\alpha),\\
      \implies l(f_j(\alpha))\ge 3l(\alpha).
    \end{gather*}
    We now verify the $(3,1)$-hyperbolicity condition.
    Let $w$ be an admissible word of length $2$ and let
    $\Delta=\Delta(\alpha,w):S^1\times[-1,1]\to M$ be the associated annulus.
    By admissibility, $w$ is either of the form $D_jD_k$, $D_j^{-1}D_k$ or $D_j^{-2}$
    with no cancellation. Reversing the orientation of the
    annulus replaces $w$ by its inverse. Since the $(3,1)$-hyperbolicity condition
    is symmetric in $l_{-1}$ and $l_1$, we may assume without loss of
    generality that $w=D_jD_k$ or $w=D_j^{-1}D_k$.

    If $w=D_jD_k$, then $\Delta_1$ is freely homotopic to
    $f_k(f_j(\alpha))$, and hence
    \[
      l_1 = l(\Delta_1) \ge l(f_k(f_j(\alpha)))
      \ge 3\,l(f_j(\alpha))=3l_0.
    \]
    If $w=D_j^{-1}D_k$, then similarly $f_j(\Delta_{0})$ is freely homotopic to
    $\Delta_{-1}$, and therefore
    \[
      l_{-1} \ge l(f_j(\Delta_0)) \ge 3l(\Delta_0)=3l_0
    \]
    In either case, the annulus $\Delta$ satisfies the $(3,1)$-hyperbolicity
    inequality.
  \end{proof}

  \begin{theorem}\label{th:bfhyp}
    Let $f_1,\ldots,f_n:\Gamma\to\Gamma$ be graph morphisms as before.
    Then $M$ satisfies the annuli flaring condition.
  \end{theorem}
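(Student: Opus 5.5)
We establish the annuli flaring condition of Bestvina--Feighn: for every $\rho>0$ there are $\lambda>1$ and $m\ge 1$ such that every $\rho$-thin essential annulus $\Delta:S^1\times[-m,m]\to M$ of sufficiently large girth is $\lambda$-hyperbolic. The powers of the $f_j$ are the ones already fixed in Proposition~\ref{prop:admis} and Proposition~\ref{prop:hypann}. We take $\lambda=2$ and $m=1$, or a small fixed $m$ if a chaining argument is needed.

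\textbf{Step 1: reduce the word.} Let $\Delta$ be a $\rho$-thin annulus. By Proposition~\ref{prop:admis}, its word $w(\Delta)$ is admissible. So the word of any length-two subannulus is of the form $D_jD_k$, $D_j^{-1}D_k$ or $D_j^{-2}$, or the inverse of one of these.

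\textbf{Step 2: compare $\Delta$ with the model annulus $\Delta(\alpha,w)$.} Thinness means that consecutive rings differ by applying $f_j$ and then conjugating by a path $\tau_i$ of length at most $\rho$. For free-homotopy (unbased) lengths we then have exactly $l(\Delta_{i+1})=l([f_j(\Delta_i)])$ or its reverse. For based lengths, tightening rel basepoint introduces an additive error. This error is bounded by a constant $C(\rho)$ that depends on $\rho$, on the bounded cancellation constants of the $f_j$, and on $\max_j\sigma(f_j)$.

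Combining this with Proposition~\ref{prop:hypann}, which gives $l(f_j(\beta))\ge 3l(\beta)$, we get for the subannulus centred at ring $0$:
\begin{equation*}
3l_0-C(\rho)\le\max\{l_{-1},l_1\}.
\end{equation*}
In the case $D_j^{-1}D_k$, we use that $f_j(\Delta_0)$ is homotopic to $\Delta_{-1}$ up to the bounded conjugator. In the case $D_jD_k$, we use $\Delta_1\simeq f_k(\Delta_0)$. These are exactly the computations in the proof of Proposition~\ref{prop:hypann}, now carried out with additive error terms.

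\textbf{Step 3: absorb the error.} If the girth satisfies $l_0\ge C(\rho)$, then $2l_0\le\max\{l_{-1},l_1\}$, so $\Delta$ is $2$-hyperbolic. If $l_0<C(\rho)$, the annulus has bounded girth. The Bestvina--Feighn combination theorem only requires flaring for annuli of girth above some threshold $H(\rho)$, so these small annuli can be discarded.

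Alternatively, if the chosen formulation of flaring has no girth threshold, we handle small girth using hyperbolicity of the $\phi_j$. Iterating Step 2 over a longer length $m$ makes the ring lengths grow geometrically in at least one direction, because the $3$-expansion dominates the additive constant once lengths exceed $C(\rho)$. Among loops of length below $C(\rho)$ there are only finitely many, and the absence of periodic conjugacy classes rules out long annuli that stay short. This gives flaring with $m$ depending on $\rho$.

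\textbf{Main obstacle.} The expected difficulty is Step 2: controlling based-loop lengths under the conjugating paths $\tau_i$. We need a bounded cancellation estimate showing that $l([\gamma\, f_j(\beta)\,\bar\gamma])\ge l(f_j(\beta))-2l(\gamma)$ and similar inequalities, uniformly for all $\rho$-thin annuli. This should hold, since tightening removes at most $2l(\gamma)$ from each side. The remaining care is in tracking the direction of the inequality in the mixed case $D_j^{-1}D_k$, where one must pass from $\Delta_{-1}$ back to $f_j(\Delta_0)$.
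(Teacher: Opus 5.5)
Your proposal is correct and follows essentially the same route as the paper. Proposition~\ref{prop:admis} restricts the length-two words; the thin conjugator $\tau_0$ costs at most $2\rho$, so your $C(\rho)$ can simply be taken to be $2\rho$ and no bounded cancellation constants are needed; the $3$-expansion from Proposition~\ref{prop:hypann} gives $l_1\ge 3l_0-2\rho$; and girth above $2\rho$ yields flaring with $\lambda=2$. The only differences are cosmetic: the paper always reverses orientation so that the second letter is positive and uses $\Delta_1$, whereas you use $\Delta_{-1}$ in the mixed case, and your fallback for a threshold-free formulation is unnecessary because the Bestvina--Feighn flaring condition already includes a girth threshold.
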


  The statement also follows from \cite[Theorem~6.4]{Mut20} after a slight modification. We
  give another proof.

  \begin{proof}
    By Proposition \ref{prop:admis}, for any $\rho$-thin annulus $\Delta$ of
    length $1$ the associated word $w=w(\Delta)$ is admissible. Without loss of
    generality and up to reversing the orientation of the annuli, such a word has form $D_j^{\pm1}D_1$ and set
    $f=f_1$. We have $\Delta_1\simeq\tau_0 f(\Delta_0)\overline{\tau_0}$
    and therefore
    \begin{gather*}
      l(\Delta_1)\ge l(f(\Delta_0))-2\rho.
    \end{gather*}
    Now since $f=f_1$ satisfies $l(f(\alpha)) \ge 3l(\alpha)$, we get
    \begin{equation*}
      l_1\ge 3l_0-2\rho.
    \end{equation*}
    If the girth $l_0>2\rho$ then $l_1\ge 2l_0$ and the Bestvina-Feighn
    flaring condition holds with $\lambda=2$.
  \end{proof}

  \section{Proof of main theorem}


  \begin{theorem}\label{thm:main1}
    Let $\phi_1,\ldots,\phi_r:F_n\to F_n$ be hyperbolic,
    essentially disjoint, non-surjective endomorphisms. Then there is a $N$ such that 
    the HNN extension
    \[
      F_n \ast_{\phi_1^N,\ldots,\phi_r^N}
    \]
    is word-hyperbolic.
  \end{theorem}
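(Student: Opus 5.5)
The plan is to apply the Bestvina--Feighn combination theorem to the graph of spaces $M=M(\Gamma;f_1,\ldots,f_r)$. Here $\pi_1(M)=F_n\ast_{\phi_1^N,\ldots,\phi_r^N}$, the vertex space $\Gamma$ and edge spaces $\Gamma_{(j)}$ are finite graphs (so their fundamental groups are hyperbolic), and the edge-to-vertex maps are the identity and the $f_j$. So it suffices to verify three things: the edge maps are quasi-isometric embeddings, the annuli flaring condition holds, and hallways flare in the sense of \cite{BF92}.

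First I fix the power $N$. Proposition~\ref{prop:admis} gives an $N_0$ for which, after passing to $\phi_i^{N_0}$, the groups $\phi_i(F_n)$ and $g\phi_j(F_n)g^{-1}$ meet trivially for $i\ne j$, and $\widehat{\Gamma}_1(f_i)=\varnothing$ by Corollary~\ref{cor:gammahat}. Both properties persist under further powers: images only shrink, and pullbacks of powers stay empty by the stabilisation lemma for immersions. I then take a further power so that Lemma~\ref{lem:31hyp} and Proposition~\ref{prop:hypann} apply, giving $l(f_j(\alpha))\ge 3l(\alpha)$ for every immersed loop or based loop $\alpha$ and every $j$. The resulting common power is the $N$ of the theorem. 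Hyperbolicity and non-surjectivity are preserved under powers, since conditions (3) and (4) of Mutanguha's theorem are stable under iteration.

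Second, I check that the edge maps are quasi-isometric embeddings. Each $f_j$ is $\pi_1$-injective, because hyperbolic non-surjective endomorphisms are injective here (they have train-track representatives with no illegal turns, hence immersions). An immersion of finite graphs lifts to a map of universal covers that is an isometric embedding onto a convex subtree, so $\phi_j^N(F_n)$ is quasiconvex in $F_n$ and the edge inclusions are quasi-isometric embeddings. If $f_j$ is only conjugate to an immersion via $h_j$, I transfer this with Lemma~\ref{lem:kK}.

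Third, and this is the main step, I prove annuli flaring for $\rho$-thin annuli of arbitrary length $2m$. By Proposition~\ref{prop:admis}, every such annulus has an admissible word: a possibly empty block $D_{j}^{-s}$ followed by positive letters. Read from the centre outward, along at least one direction every step is a forward application of some $f_k$. This holds because if the centre lies in the negative block, the rings toward $-m$ are images under $f_j$, and if it lies in the positive part, the rings toward $+m$ are images. Iterating the estimate $l_{t+1}\ge 3l_t-2\rho$ from Theorem~\ref{th:bfhyp} along that direction, girth greater than $2\rho$ gives $l_{t+1}\ge 2l_t$ at each step, hence $\max\{l_{-m},l_m\}\ge 2^m l_0$. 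This is the required $\lambda$-hyperbolicity for $m$ large, with the girth threshold depending only on $\rho$. The obstacle I expect is the junction $D_j^{-1}D_k$, where the outward directions switch. Admissibility forbids $D_kD_j^{-1}$, so the minimum of the ring lengths can occur only at that single switch, and on both sides of it the lengths grow outward. Small-girth annuli are handled by the standard Bestvina--Feighn reduction, which only requires flaring above a girth threshold $H(\rho)$. Once flaring holds, the combination theorem yields word-hyperbolicity of $\pi_1(M)$.
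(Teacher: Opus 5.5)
Your proposal follows essentially the same route as the paper: you fix $N$ using Proposition~\ref{prop:admis} together with the $3$-fold expansion from Lemma~\ref{lem:31hyp} and Proposition~\ref{prop:hypann}, use admissibility (a negative block followed by a positive block) to see that the rings grow outward in at least one direction from the centre, and feed the estimate $l_{t+1}\ge 3l_t-2\rho$ from Theorem~\ref{th:bfhyp} into the Bestvina--Feighn combination theorem. Your additions (quasiconvexity of the edge groups, persistence of the hypotheses under powers, and annuli of length $2m$ instead of the paper's length-$2$ annuli) elaborate that same argument, and like the paper's proof it takes Proposition~\ref{prop:admis} as a black box.
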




  \begin{proof}
    Following the notation of Section~3, choose $N$ sufficiently large so
    that all associated words of $\rho$-thin annuli are admissible, as
    ensured by Proposition~\ref{prop:admis}. Let
    \[
      M=M(\Gamma;f_1,\ldots,f_r)
    \]
    be the mapping torus, where each $f_i$ is a representative of
    $\phi_i^N$. By Proposition~\ref{prop:hypann}, the mapping torus $M$
    satisfies $(3,1)$-hyperbolicity. By Theorem~\ref{th:bfhyp}, $M$
    satisfies the annuli flaring condition, and hence
    \[
    \pi_1(M)=F_n \ast_{\phi_1^N,\ldots,\phi_r^N}
    \]
    is word-hyperbolic.
  \end{proof}

    The following results will be used to prove the next theorem.

    \begin{lemma}\label{lem:weak_conv}
    Let $\phi:F_n\to F_n$ be a non-surjective, fully irreducible
    endomorphism and let $f:\Gamma\to\Gamma$ be its expanding
    train–track representative, which has no illegal turns (according to \ref{thm:inj_tt}).
    Let $w_k\in \phi^k(F_n)$ and let $\alpha_k$ be immersed loops
    representing the conjugacy classes of $w_k$ in $\Gamma$.
    Then $\{\alpha_k\}$ weakly converges to the attracting
    lamination $\Lambda_\Gamma(f)$.
    \end{lemma}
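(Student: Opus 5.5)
Since $w_k\in\phi^k(F_n)$, write $w_k=\phi^k(u_k)$ with $u_k\neq e$ (we assume $w_k\neq e$, as needed for $\alpha_k$ to be a loop). Let $\beta_k$ be the immersed loop in $\Gamma$ representing the conjugacy class of $u_k$. Then $\alpha_k$ is the tightening of $f^k(\beta_k)$. By Theorem~\ref{thm:inj_tt}, $f$ has no illegal turns, so $f^k(\beta_k)$ is already immersed and $\alpha_k=f^k(\beta_k)$ with no cancellation. Thus $\alpha_k$ is a concatenation of the paths $f^k(e)$ as $e$ runs over the edges of $\beta_k$ (with multiplicity). The plan is to show that for each fixed $L$, all but a vanishing proportion of points of $\alpha_k$ have $L$-neighbourhoods that are leaf segments. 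This is the weak-convergence condition.

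\emph{Step 1 (images of edges are leaf segments).} Every path $f^k(e)$, and every subpath of it, is a leaf segment of $\Lambda_\Gamma(f)$. Irreducibility of the transition matrix gives some power $f^p$ such that $f^p(e_0)$ contains every edge, where $e_0$ contains the periodic point used to build a leaf $\ell$. Iterating a neighbourhood then shows that $f^{k}(e)$ sits inside $f^{k+p}(e_0)$, and these segments are leaf segments by construction of $\ell$. Legality ensures that no tightening intervenes.

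\emph{Step 2 (counting bad points).} A point $x$ of $\alpha_k$ whose $L$-neighbourhood is not a leaf segment must lie within distance $L$ of one of the junction points between consecutive blocks $f^k(e)$ and $f^k(e')$. The number of junctions equals $l(\beta_k)$, so the bad set has measure at most $2L\,l(\beta_k)$. The total length is $l(\alpha_k)\ge \lambda_{\min}(k)\,l(\beta_k)$, where $\lambda_{\min}(k)=\min_e l(f^k(e))$. Hence the bad proportion is at most $2L/\lambda_{\min}(k)$.

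\emph{Step 3 (uniform edge growth, the main obstacle).} It remains to show that $\min_e l(f^k(e))\to\infty$. The train track is expanding with irreducible transition matrix $A$ and Perron--Frobenius eigenvalue $\lambda_f>1$. Hence $A^k$ applied to any standard basis vector has entry sum tending to infinity, since every column eventually becomes positive and grows like $\lambda_f^k$. Alternatively, one can invoke expansiveness (Reynolds' dichotomy, Proposition above) or the proof of Lemma~\ref{lem:31hyp}. The delicate point is edges that are not expanded; irreducibility of $A$ is exactly what rules these out. Combining the steps gives the conclusion for every $L$, and the ratio tends to $1$.
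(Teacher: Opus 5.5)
Your proposal is correct and follows essentially the same route as the paper. In both, $\alpha_k=f^k(\beta_k)$ with no cancellation because there are no illegal turns, the bad points lie within $L$ of the $m_k$ junctions, and so the bad proportion is at most $2L/\min_e l(f^k(e))\to 0$; your Step~1 also makes explicit a fact the paper leaves implicit, namely that each block $f^k(e)$ is a leaf segment. One quibble: the single power $p$ in Step~1 and the claim that every column of $A^k$ eventually becomes positive need primitivity rather than mere irreducibility, which is available because $\phi$ is fully irreducible (the paper likewise invokes a primitive transition matrix).
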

    
\begin{proof}
For each $k$ choose $u_k\in F_n$ such that
$w_k=\phi^k(u_k)$ and let $\beta_k$ be an immersed loop
representing $u_k$ in $\Gamma$.
Then, after tightening,
\[
\alpha_k = f^k(\beta_k).
\]

Since $f$ is an expanding train–track map with primitive
transition matrix, for each edge $e$ of $\Gamma$ we have
$l(f^k(e))\to\infty$ as $k\to\infty$.
Define
\[
\delta_k = \min\{l(f^k(e)) \mid e \text{ an edge of } \Gamma\}.
\]
Then $\delta_k\to\infty$.

If $\beta_k$ crosses $m_k$ edges, and since all turns are legal, we have
\[
l(\alpha_k)
=
l(f^k(\beta_k))
=
\sum_{i=1}^{m_k} l(f^k(e_i))
\ge m_k \delta_k.
\]

Fix $L>0$.
Because $f$ has no illegal turns, every point of $\alpha_k$
whose $L$–neighbourhood does not form a leaf segment
must lie within distance $L$ of the image of a vertex
of $\beta_k$.
There are $m_k$ such vertices, and each contributes at most
$2L$ to the total length of bad points.
Hence the measure of the set of points whose $L$–neighbourhood
is not a leaf segment is at most $2L m_k$.

Therefore
\[
\frac{
m\big(\{x\in \alpha_k \mid L\text{–nbhd of }x
\text{ is not a leaf segment}\}\big)
}{
l(\alpha_k)
}
\le
\frac{2L m_k}{m_k \delta_k}
=
\frac{2L}{\delta_k}.
\]

Since $\delta_k\to\infty$, the right-hand side tends to $0$.
Thus for every $L>0$,
\[
\frac{
m\big(\{x\in \alpha_k \mid L\text{–nbhd of }x
\text{ is a leaf segment}\}\big)
}{
l(\alpha_k)
}
\to 1.
\]
Therefore the immersed loops $\alpha_k$ converge weakly to the lamination $\Lambda_{\Gamma}(f)$ as claimed.
\end{proof}

\begin{proposition}\label{prop:indep_disj}
Suppose $\phi, \psi :F_n\to F_n$ are fully irreducible,
non-surjective, hyperbolic, independent endomorphisms of $F_n$.
Then $\phi$ and $\psi$ are essentially disjoint.
\end{proposition}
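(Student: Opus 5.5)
We argue by contradiction, using Lemma~\ref{lem:weak_conv} as the main tool. Suppose $\phi$ and $\psi$ are not essentially disjoint. Then for every $N\ge 1$ there are $g_N\in F_n$ and a nontrivial element
\[
w_N\in \phi^N(F_n)\cap g_N\psi^N(F_n)g_N^{-1}.
\]
Fix a marked graph $\Gamma$ carrying the train track $f$ for $\phi$ with no illegal turns (Theorem~\ref{thm:inj_tt}). Let $\Gamma'$ carry the train track $g$ for $\psi$, also without illegal turns, and let $\tau:\Gamma'\to\Gamma$ be the change of marking. Let $\alpha_N$ be the immersed loop in $\Gamma$ representing the conjugacy class of $w_N$.

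\textbf{Step 1.} Since $w_N\in\phi^N(F_n)$, Lemma~\ref{lem:weak_conv} gives directly that $\alpha_N$ converges weakly to $\Lambda_\Gamma(f)$.

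\textbf{Step 2.} The conjugacy class of $w_N$ also lies in $\psi^N(F_n)$. Let $\alpha'_N$ be its immersed representative in $\Gamma'$. By Lemma~\ref{lem:weak_conv} applied to $g$, the loops $\alpha'_N$ converge weakly to $\Lambda_{\Gamma'}(g)$. We then transport this to $\Gamma$: $\alpha_N=[\tau(\alpha'_N)]$, and we want to conclude that $\alpha_N$ converges weakly to $\Lambda_\Gamma(g)$. This is the analogue of Lemma~1.11 of \cite{BFH97}. The argument runs through bounded cancellation for $\tau$. Tightening $\tau$ of a long leaf segment of $\Lambda_{\Gamma'}(g)$ gives a leaf segment of $\tau(\ell)$, up to an error of length at most the bounded cancellation constant $C(\tau)$ at each end. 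By Lemma~\ref{lem:kK}, lengths change by at most the factor $K$. Hence the proportion of $\alpha_N$ that is covered by $L$-leaf segments of $\Lambda_\Gamma(g)$ still tends to $1$.

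\textbf{Step 3.} Since $\alpha_N$ converges weakly to both $\Lambda_\Gamma(f)$ and $\Lambda_\Gamma(g)$, we show the two laminations are equivalent. Fix $L$. For large $N$, more than half of the points of $\alpha_N$ have $L$-neighbourhoods that are leaf segments of $\Lambda_\Gamma(f)$, and more than half have $L$-neighbourhoods that are leaf segments of $\Lambda_\Gamma(g)$. So some point has an $L$-neighbourhood that is a leaf segment of both, and therefore the two laminations share leaf segments of every length. To upgrade this to the statement that every leaf segment of one is a leaf segment of the other, use quasi-periodicity. Given a leaf segment $\sigma$ of $\Lambda_\Gamma(f)$ of length $L$, choose $L'$ from quasi-periodicity so that every leaf segment of $\Lambda_\Gamma(f)$ of length $L'$ contains $\sigma$. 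A common leaf segment of length $L'$ then contains $\sigma$, so $\sigma$ is a leaf segment of $\Lambda_\Gamma(g)$. The argument is symmetric. Hence $\Lambda_\Gamma(f)\sim\Lambda_\Gamma(g)$, contradicting independence.

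The main obstacle is Step 2: transferring weak convergence across a change of marking for a sequence that is not of the form $g^i(\alpha)$. Lemma~1.11 of \cite{BFH97} is stated only for iterates, so the bounded cancellation estimate has to be redone for an arbitrary weakly convergent sequence. Two facts about the loops $\alpha'_N$ are needed for the error terms to be negligible in proportion:
\begin{enumerate}
\item Their lengths go to infinity. This follows from expansiveness and hyperbolicity, as in the proof of Lemma~\ref{lem:weak_conv} through $\delta_k\to\infty$.
\item The bad set of each $\alpha'_N$ is a union of at most $m_N$ intervals of bounded size, where $m_N\le l(\alpha'_N)/\delta_N$.
\end{enumerate}
Each such bad interval spoils only a bounded amount of length in $\alpha_N$, so the total error in $\alpha_N$ is $O(m_N)$, which is $o(l(\alpha_N))$.
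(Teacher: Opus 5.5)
Your proposal is correct and follows the paper's proof essentially step for step. Both arguments take a nontrivial $w_N$ in both images, apply Lemma~\ref{lem:weak_conv} to each endomorphism, move the weak convergence for $\psi$ into $\Gamma$ via the change of marking, and then use quasi-periodicity to conclude $\Lambda_\Gamma(f)\sim\Lambda_\Gamma(g)$, contradicting independence. The one difference is that the paper simply asserts weak convergence is preserved under change of marking, while your bounded-cancellation sketch actually justifies that step; it uses that $\alpha'_N$ is a concatenation of $m_N$ leaf segments $g^N(e_i)$, each of length at least $\delta_N$.
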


\begin{proof}
Suppose, for a contradiction, that $\phi$ and $\psi$
are not essentially disjoint.
Then for every integer $k\ge 1$, there exists
$g_k\in F_n$ such that
\[
\phi^k(F_n)\cap g_k\,\psi^k(F_n)\,g_k^{-1}
\]
contains a nontrivial element $w_k$.

Choose nontrivial $u_k,v_k\in F_n$ such that
\[
w_k=\phi^k(u_k)=g_k\,\psi^k(v_k)\,g_k^{-1}.
\]

Let $f:\Gamma\to\Gamma$ and $g:\Gamma'\to\Gamma'$
be expanding train–track representatives of
$\phi$ and $\psi$, respectively.
Let $\alpha_k$ be a cyclically reduced immersed loop
in $\Gamma$ representing the conjugacy class $[w_k]$,
and let $\alpha'_k$ be a cyclically reduced immersed loop
in $\Gamma'$ representing the same conjugacy class.

By Lemma~\ref{lem:weak_conv}, the sequence $\{\alpha_k\}$
weakly converges to $\Lambda_\Gamma(f)$, and the sequence
$\{\alpha'_k\}$ weakly converges to $\Lambda_{\Gamma'}(g)$.

Let $\tau:\Gamma'\to\Gamma$ be change of markings.
Since $\tau$ preserves immersed loops up to tightening,
we have
\[
[\tau(\alpha'_k)] = [\alpha_k].
\]
Weak convergence is preserved under change of marking,
and therefore $\{\alpha_k\}$ also weakly converges
to $\Lambda_\Gamma(g)$.

Hence the sequence $\{\alpha_k\}$ weakly converges
simultaneously to $\Lambda_\Gamma(f)$ and to
$\Lambda_\Gamma(g)$.

Let $L>0$ and let $\gamma$ be a leaf segment of
$\Lambda_\Gamma(f)$ of length $L$.
By quasi-periodicity of $\Lambda_\Gamma(f)$,
there exists $L'>L$ such that every leaf segment
of length $L'$ contains $\gamma$ as a subsegment.

Since $\{\alpha_k\}$ weakly converges to both laminations,
for sufficiently large $k$ the loop $\alpha_k$
contains a segment $\gamma'$
of length $L'$ which is a leaf segment of
both $\Lambda_\Gamma(f)$ and $\Lambda_\Gamma(g)$.
As $\gamma'$ contains $\gamma$, it follows that
$\gamma$ is a leaf segment of $\Lambda_\Gamma(g)$.

Since $L>0$ was arbitrary,
every leaf segment of $\Lambda_\Gamma(f)$
occurs in $\Lambda_\Gamma(g)$.
By symmetry, the reverse inclusion holds.
Hence
\[
\Lambda_\Gamma(f)=\Lambda_\Gamma(g),
\]
contradicting independence.

Therefore $\phi$ and $\psi$ are essentially disjoint.
\end{proof}

  \begin{theorem}\label{thm:main2}
    Let $\phi_1,\ldots,\phi_r:F_n\to F_n$ be hyperbolic,
    non-surjective, independent endomorphisms of $F_n$. Then there is a $N$ such that the HNN
    extension $F_n*_{j=1}^r{}_{\phi_j^N}$ is word-hyperbolic.
  \end{theorem}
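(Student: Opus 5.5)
The plan is to reduce Theorem~\ref{thm:main2} to Theorem~\ref{thm:main1} by showing that a finite, pairwise independent family of hyperbolic non-surjective endomorphisms is essentially disjoint. The only hypothesis of Theorem~\ref{thm:main1} that is not assumed here is essential disjointness; hyperbolicity and non-surjectivity are given. Since hyperbolicity is defined only for fully irreducible endomorphisms, each $\phi_j$ is fully irreducible, and the independence definition applies to every pair.

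First, apply Proposition~\ref{prop:indep_disj} to each pair $\phi_i,\phi_j$ with $i\neq j$. This gives an integer $N_{ij}\ge 1$ such that
\[
\phi_i^{N_{ij}}(F_n)\cap g\,\phi_j^{N_{ij}}(F_n)\,g^{-1}=\{e\}\quad\text{for all } g\in F_n .
\]
Second, pass to a common exponent. The images $\phi^k(F_n)$ are nested and decreasing in $k$, because $\phi^{k+1}(F_n)=\phi^k(\phi(F_n))\subseteq\phi^k(F_n)$. Hence the disjointness condition persists for every exponent at least $N_{ij}$, applied to both $\phi_i$ and $\phi_j$ at once. Setting $N_0=\max_{i\neq j}N_{ij}$ therefore gives a single exponent witnessing essential disjointness of the whole family $\phi_1,\dots,\phi_r$.

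Third, apply Theorem~\ref{thm:main1}, which yields some $N$ such that $F_n\ast_{\phi_1^N,\ldots,\phi_r^N}$ is word-hyperbolic. Its proof chooses $N$ large enough to satisfy Proposition~\ref{prop:admis} (enlarging $N_0$ if needed and handling the pullback condition of Corollary~\ref{cor:gammahat}) and to make Proposition~\ref{prop:hypann} hold. By the nesting remark, enlarging the exponent never destroys essential disjointness. So we take $N$ to be a common multiple of all the exponents that the earlier results require, and the conclusion follows directly.

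I expect the main obstacle to be bookkeeping of exponents rather than new mathematics. One must check that the powers used in Proposition~\ref{prop:hypann} and Lemma~\ref{lem:31hyp} (replacing $f_j'$ by a power) can be taken to be multiples compatible with the essential-disjointness exponent. Taking a common multiple should suffice, because each property involved is preserved when passing to further powers: expansion by a factor of $3$ under $f^N$ persists for $f^{mN}$, and the pullback-stabilisation condition $\widehat\Gamma_1(f^N)=\varnothing$ persists for higher powers by the pullback lemmas (cf.\ Lemma~3.4 of \cite{Mut20}). A secondary point is that Proposition~\ref{prop:indep_disj} uses train-track representatives on different graphs. This causes no difficulty, since the resulting disjointness is a purely algebraic statement about subgroups of $F_n$.
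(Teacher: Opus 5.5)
Your proposal is correct and follows the paper's proof: apply Proposition~\ref{prop:indep_disj} to each pair $i\neq j$, pass to the common exponent $\max_{i\neq j}N_{ij}$, and invoke Theorem~\ref{thm:main1}. Your observation that $\phi^{k+1}(F_n)\subseteq\phi^k(F_n)$, so that disjointness at exponent $N_{ij}$ persists for all larger exponents, supplies the justification for taking the maximum that the paper leaves implicit. The exponent bookkeeping in your third paragraph is unnecessary, since Theorem~\ref{thm:main1} already produces its own $N$ once essential disjointness is established.
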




  \begin{proof}
    
    Since the endomorphisms $\phi_1,\ldots,\phi_r$ are independent,
    Proposition~\ref{prop:indep_disj} implies that for each pair
    $i\neq j$ there exists $N_{ij}\ge 1$ such that
    \[
        \phi_i^{N_{ij}}(F_n)\cap g\,\phi_j^{N_{ij}}(F_n)\,g^{-1}
        =\{e\}
        \quad\text{for all } g\in F_n.
    \]
    Let $N=\max\{N_{ij}\mid i\neq j\}$.
    Then for this $N$, the endomorphisms
    $\phi_1,\ldots,\phi_r$ are \emph{essentially disjoint}.
    Thus the hypotheses of Theorem~\ref{thm:main1}
    are satisfied, and the HNN extension
    \[
    F_n*_{j=1}^r{}_{\phi_j^N}
    \]
    is word-hyperbolic for some large $N$.
  \end{proof}


  \bibliography{references}
  \bibliographystyle{plain}








\end{document}